\documentclass[12pt]{amsart}
\usepackage{enumerate,amssymb,amsfonts}
\usepackage{amsthm,color,amsmath}
\usepackage{comment}
\usepackage{hyperref}
\usepackage{mathtools}
\usepackage[utf8]{inputenc} 
\usepackage[T1]{fontenc}

\setlength{\parindent}{0.25in}
\setlength{\parskip}{0.2cm}

\usepackage{graphicx}
 \usepackage{cite}
%
%
%
%

\setcounter{MaxMatrixCols}{10}

\newcommand{\R}{{\mathbb R}}

\newcommand{\N}{{\mathbb N}}

\def\0{{\mathbf 0}}

\newcommand{\e}{\epsilon}
\newcommand{\vp}{\varphi}

\newcommand{\esssup}{\operatornamewithlimits{ess\,sup}}

\newcommand{\ddiv}{\operatorname{div}}

\newcommand{\ra}{\rightarrow}

\newcommand\norm[1]{\Arrowvert {#1}\Arrowvert}

\theoremstyle{plain}
\newtheorem{theorem}{Theorem}[section]

\newtheorem{lemma}[theorem]{Lemma}

\theoremstyle{definition}

\theoremstyle{remark}

\newtheorem{remark}[theorem]{Remark}

\numberwithin{equation}{section}


 \title[Vectorial Linear Transmission]{Lipschitz regularity in vectorial\\ linear transmission problems   }

\author{Alessio Figalli}
\email{alessio.figalli@math.ethz.ch}
\address{Department of Mathematics, ETH Z\"urich,  Raemistrasse 101, 8092 Z\"urich, Switzerland }

\author{Sunghan Kim}
\email{sunghan@kth.se}
\address{Department of Mathematics, KTH Royal Institute of Technology, 100 44 Stockholm, Sweden}

\author{Henrik Shahgholian}
\email{henriksh@kth.se}
\address{Department of Mathematics, KTH Royal Institute of Technology, 100 44 Stockholm, Sweden}
\thanks{S.\ K.\ was supported by postdoctoral fellowship from Knut and Alice Wallenberg Foundation. H.\
Sh.\ was partially supported by Swedish Research Council. A.\ F.\ is supported by the European Research Council under the Grant Agreement No. 721675 “Regularity and Stability in Partial Differential Equations (RSPDE).”}

\begin{document}

\begin{abstract}
We consider vector-valued solutions to a linear transmission problem, and we 
prove that  Lipschitz-regularity on one phase is transmitted to the next phase. More exactly, given a solution $u:B_1\subset \R^n \to \R^m$ to the elliptic system
\begin{equation*}
\ddiv ((A + (B-A)\chi_D )\nabla u) = 0 \quad \text{in }B_1, 
\end{equation*} 
where $A$ and $B$ are Dini continuous, uniformly elliptic matrices, we prove that if $\nabla u \in L^{\infty} (D)$ then $u$ is Lipschitz in $B_{1/2}$. 
A similar result is also derived for the parabolic counterpart  of this problem.

\end{abstract}

\subjclass[2010]{35B65, 35J47, 35K40 (primary) and 35R35 (secondary)}
\keywords{Transmission problems, elliptic systems, parabolic systems, Lipschitz regularity.}

\maketitle
\tableofcontents


\section{Introduction}

\subsection{Background}
This paper concerns  optimal regularity  results for vector-valued solutions to linear elliptic systems (and their parabolic counterparts), with free boundaries, for  the so-called transmission problem 
\begin{equation}\label{eq:trans}
\ddiv ((A + (B-A)\chi_D )\nabla u) = 0 \quad \text{in }B_1;
\end{equation} 
see below for  notational specification and exact definitions. 

The  transmission problem has long been under scrutiny and subject to intense study  from various aspects: existence, regularity, geometry of the free boundary, etc. Its importance  has shown to be central in many applications when composite materials are used. To avoid digression from the main mathematical problem, we refer the interested  reader  to two books that cover such applications \cite{Am-Ka}, \cite{Isak-2017}.

 In this paper we introduce yet another type of question, concerning the fine regularity of solutions. Indeed, under rather general assumptions, we prove  that if a solution to this problem is Lipschitz in $D$, then it is Lipschitz in the ball $B_{1/2}$. The proof is  inspired by the approach in       \cite{ALS} and \cite{FS14},  where the authors proved similar results for the scalar  obstacle-type problems.

 Our results can be set in the context of optimal regularity of solutions,  subject to harmonic continuation property (see Section \ref{discuss} for an explanation)  in classical inverse-conductivity problem, as treated in \cite{ACS} (see also  \cite{AL-IS} for the two-dimensional case). Related  results have  been considered in \cite{KLS1}, \cite{KLS2}.
 It needs to be remarked that the techniques from these references do not apply to our setting, since our problem has different prerequisites and   is of a different  nature. Indeed, under  harmonic-continuation-property assumption,  one   uses the well-established monotonicity formula to prove Lipschitz regularity of solutions, as done in   \cite{ACS}. 
 The approach of  \cite{ACS} to prove Lipschitz regularity for solutions   could be extended also to the case of $C^{2}$-continuation-property; see Section \ref{discuss-1} for some explanation. 
Our approach is more general, as we only assume the solution to be Lipschitz ``on one side'', i.e., $\nabla u\in L^\infty(D)$, and the proof applies to linear systems and possibly  to several other equations. We shall discuss this further,  along with other aspects of the problem, in Section~\ref{discuss}.

It is noteworthy that we do not impose any assumption on the regularity of $\partial D$. As for regular boundaries, one can obtain the Lipschitz regularity of $u$ across $\partial D$, without the assumption $\nabla u\in L^\infty(D)$. For instance, in \cite{LN}, it is proved that $\nabla u\in L^\infty$ when $\partial D$ is $C^{1,\alpha}$, and that the derivatives of $u$ are H\"older continuous up to $\partial D$ from each side.

\subsection{Definitions and standing assumptions}

Throughout  the paper, the parameters $n$, $m$, $\lambda$, $\Lambda$, and $\omega$ will be fixed, unless stated otherwise. By $(f)_{z,r}$, we shall denote the average of $f$ over the ball $B_r(z)$, i.e., $$(f)_{z,r} = \frac{1}{|B_r(z)|} \int_{B_r(z)} f(x)\,dx.$$ In addition, we shall simply write $(f)_r$ for $(f)_{0,r}$. In Section \ref{section:proof-parabolic}, we shall follow the usual parabolic terminology: parabolic cubes $Q_r(X) = B_r(x)\times (t-r^2,t)$ with $X = (x,t)\in\R^{n+1}$, $Q_r = Q_r(0)$, the parabolic distance $d_p(X,X') = \sqrt{|x - x'|^2 + (t-t')}$, and the parabolic boundary  $$\partial_p Q_r(X) = (B_r(x)\times \{t-r^2\} )\cup (\partial B_r(x) \times (t-r^2,t)).$$ 

The  following  elliptic system and also its parabolic counterpart (see \eqref{eq:main-para}) are the main equations treated in this paper:
\begin{equation}\label{eq:main}
\ddiv ( (A + (B - A)\chi_D )\nabla u) = 0 \quad \hbox{in } B_1
\end{equation}
where $A = (a_{ij}^{\alpha\beta})_{1\leq i,j\leq m}^{1\leq \alpha,\beta\leq n}$, and $B =  (b_{ij}^{\alpha\beta})_{1\leq i,j\leq m}^{1\leq \alpha,\beta\leq n}$ are coefficient mappings, and $D\subset\R^n$ is an open subset. 
We  say $u$ is a weak solution of \eqref{eq:main} in $B_1$, if $u\in W^{1,2}(B_1;\R^m)$ and 
$$
\int_{B_1} \left(a_{ij}^{\alpha\beta} + \left(b_{ij}^{\alpha\beta}-a_{ij}^{\alpha\beta}\right)\chi_D \right) \partial_\beta u^j \partial_\alpha \vp^i\,dx = 0,
$$
for any $\vp\in W_0^{1,2}(B_1;\R^m)$, where we used summation convention  over repeated indices. 

We specify the conditions on the coefficients $A$ and $B$ as follows:
\begin{enumerate}[(i)]
\item (Ellipticity) There exists a constant $\lambda \in(0,1)$ such that 
\begin{equation}\label{eq:a-ellip}
\begin{split}
\min\left\{\inf_{B_1} a_{ij}^{\alpha\beta} \xi_\alpha^i \xi_\beta^j, \inf_{B_1} b_{ij}^{\alpha\beta} \xi_\alpha^i \xi_\beta^j \right\} \geq \lambda |\xi|^2,
\end{split}
\end{equation}
for any $\xi\in \R^{mn}$. 
\item (Boundedness) With the same $\lambda$ as above,
\begin{equation}\label{eq:a-bd}
\max\left\{ \sup_{B_1} |a_{ij}^{\alpha\beta}|,\sup_{B_1} |b_{ij}^{\alpha\beta}| \right\} \leq \frac{1}{\lambda}, 
\end{equation} 
for any $1\leq i,j\leq m$ and any $1\leq \alpha,\beta\leq n$. 
\item (Regularity) There exist a Dini modulus of continuity\footnote{That is, $\omega:(0,1]\to (0,\infty)$ is a non-decreasing function satisfying $$\lim_{r\to 0}\omega(r) = 0\qquad \text{and}\qquad \int_0^1\frac{\omega(r)}{r}\,dr<\infty.$$} $\omega$ and a constant $\Lambda > 0$ such that $a_{ij}^{\alpha\beta} \in C^{0,\omega}(B_1)$ and 
\begin{equation}\label{eq:a-dini}
\left[ a_{ij}^{\alpha\beta} \right]_{C^{0,\omega}(B_1)} \leq \Lambda,
\end{equation} 
for each $1\leq \alpha,\beta\leq n$ and each $1\leq i,j \leq m$.
\end{enumerate}
Note that we only require $B$ to be {\it bounded measurable}, where $B$ is the matrix coefficient for the domain $D$ where $u$ is assumed to be Lipschitz. 

\subsection{Main Results}

Our main theorem for elliptic system is the following. 

\begin{theorem}\label{theorem:elliptic}
Let $D\subset\R^n$ be an open set, and $A , B : B_1 \ra \R^{n^2m^2}$ satisfy \eqref{eq:a-ellip}, \eqref{eq:a-bd}, and \eqref{eq:a-dini}. Let $u\in W^{1,2}(B_1;\R^m)$ be a weak solution of \eqref{eq:main} in $B_1$, and assume further that $\nabla u\in L^\infty(B_1\cap D)$. Then $u\in W^{1,\infty}(B_{1/2};\R^m)$ and 
\begin{equation}\label{eq:lip}
\norm{\nabla u}_{L^\infty(B_{1/2})} \leq C ( \norm{u}_{L^2(B_1)} + \norm{\nabla u}_{L^\infty(B_1\cap D)}),
\end{equation}
where $C>0$ depends only on $n$, $m$, $\lambda$, $\Lambda$, and $\omega$. 
\end{theorem}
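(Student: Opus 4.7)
The approach I propose is a Campanato-type iteration combined with a compactness/contradiction argument, in the spirit of \cite{ALS, FS14}. For $x_0 \in B_{1/2}$ and $r > 0$ small, define
\[
\psi(x_0, r) := \inf_{P \text{ affine}} \frac{1}{r^{n+2}} \int_{B_r(x_0)} |u - P|^2 \,dx,
\]
where $P$ ranges over affine maps $\R^n \to \R^m$. By Campanato's embedding, a uniform bound $\psi(x_0, r) \leq C$ for all admissible pairs $(x_0, r)$ translates into $u \in W^{1,\infty}(B_{1/2})$ with the quantitative bound \eqref{eq:lip}. So the task reduces to establishing such a uniform bound in terms of $M_0 := \norm{u}_{L^2(B_1)} + N$, where $N := \norm{\D u}_{L^\infty(B_1 \cap D)}$.

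At each scale $(x_0, r)$ I would freeze coefficients and compare $u$ with the solution $h = h_{x_0, r}$ of $\ddiv(A(x_0) \D h) = 0$ in $B_r(x_0)$ with $h = u$ on $\partial B_r(x_0)$. Being a constant-coefficient elliptic system, $h$ satisfies the standard $C^{1,1}$-type decay $\psi_h(x_0, \rho) \leq C(\rho/r)^2 \psi_h(x_0, r)$ for $\rho \leq r/2$. The difference $w := u - h$ solves
\[
\ddiv(A(x_0) \D w) = -\ddiv\bigl( ((A - A(x_0))\chi_{D^c} + (B - A(x_0))\chi_D) \D u \bigr)
\]
with $w = 0$ on $\partial B_r(x_0)$, and an energy estimate using ellipticity and the Dini bound on $A$ yields
\[
\int_{B_r(x_0)} |\D w|^2 \,dx \leq C \omega(r)^2 \int_{B_r(x_0)} |\D u|^2 \,dx + C |D \cap B_r(x_0)| \, N^2.
\]
Combining with the triangle inequality and Poincar\'e produces an almost-contractive relation for $\psi(x_0, \cdot)$, modulo error terms controlled by $\omega(r)$ and $N$.

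Closing the iteration directly would require uniform control of $r^{-n}\int_{B_r(x_0)} |\D u|^2$ across all scales, which is precisely what we are trying to prove. To sidestep this I would argue by contradiction: suppose a sequence $(A_k, B_k, D_k, u_k)$ satisfies the hypotheses with $M_{0,k} \leq 1$ but $\sup_{x_0, r} \psi_{u_k}(x_0, r) \to \infty$. A maximal-selection argument produces points $x_k$ and scales $r_k \downarrow 0$ with $\phi_k := \psi_{u_k}(x_k, r_k) \to \infty$ such that the rescaled functions
\[
\tilde u_k(y) := \frac{u_k(x_k + r_k y) - P_k(x_k + r_k y)}{r_k \sqrt{\phi_k}},
\]
with $P_k$ the best affine approximant of $u_k$ on $B_{r_k}(x_k)$, satisfy uniform $L^2_\mathrm{loc}(\R^n)$ bounds. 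By construction $\tilde u_k$ has vanishing best affine approximant on $B_1$ and unit $L^2(B_1)$-norm, while $\norm{\D \tilde u_k}_{L^\infty(D_k)} \leq N_k/\sqrt{\phi_k} \to 0$, where $D_k := (D - x_k)/r_k$.

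Passing to the limit, the rescaled coefficients $A_k(y) = A(x_k + r_k y)$ converge uniformly to a constant matrix $A_\infty$ by Dini continuity, while the contribution of $(B_k - A_k)\chi_{D_k} \D \tilde u_k$ vanishes in $L^2_\mathrm{loc}$ since $\chi_{D_k} \D \tilde u_k \to 0$ strongly. Hence any subsequential limit $\tilde u_\infty$ satisfies the constant-coefficient system $\ddiv(A_\infty \D \tilde u_\infty) = 0$ in all of $\R^n$, with at most linear growth at infinity inherited from the maximal-scale selection. A Liouville-type argument for constant-coefficient elliptic systems then forces $\tilde u_\infty$ to be affine, contradicting its vanishing best affine approximant on $B_1$ together with $\norm{\tilde u_\infty}_{L^2(B_1)} = 1$. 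The most delicate step will be the maximal-selection procedure guaranteeing uniform $L^2_\mathrm{loc}(\R^n)$ bounds on $\tilde u_k$ over arbitrarily large balls; this is where Dini continuity of $A$ and the one-sided Lipschitz bound on $D$ must be balanced carefully. Once it is in place, the absence of any regularity assumption on $\partial D$ is innocuous: the geometry of $D_k$ enters the limit only through $\D \tilde u_k \to 0$ there, which is precisely what causes the transmission term to disappear.
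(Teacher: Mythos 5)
The central claim on which your strategy rests---that a uniform bound on the Campanato excess $\psi(x_0,r)$ over all admissible $(x_0,r)$ implies $u\in W^{1,\infty}(B_{1/2})$---is false. Via Poincar\'e, $\psi(x_0,r)\leq C$ uniformly is equivalent to $\nabla u\in BMO(B_{1/2})$ (Campanato's characterization at exponent zero), which is strictly weaker than $\nabla u\in L^\infty$: the function $x\mapsto\log|x|$ has bounded mean oscillation but is unbounded. In fact the paper proves exactly this BMO bound as its opening step (Lemma~\ref{lemma:bmo}), using precisely the observation you make---that the transmission term $F=(A-B)\chi_D\nabla u$ is in $L^\infty$, so the equation reads $\ddiv(A\nabla u)=\ddiv F$ and classical Dini-coefficient $L^2$-$BMO$ theory applies. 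That step is essentially free. The whole difficulty of the theorem is upgrading the $BMO$ bound to an $L^\infty$ bound on $\nabla u$, and your plan does not engage with it.

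This error also empties the contradiction argument. Because $\psi_{u_k}(x_0,r)$ is already uniformly bounded by a constant depending only on $n,m,\lambda,\Lambda,\omega$ (independently of $k$), the hypothesis $\sup_{x_0,r}\psi_{u_k}(x_0,r)\to\infty$ never occurs, so there is no blowup sequence to analyse. If you instead try to contradict unboundedness of the affine slopes $\nabla\ell_{z,r}$ along dyadic scales, you hit the real obstruction: the $BMO$ estimate controls only the increments $|\nabla\ell_{z,2^{-k}}-\nabla\ell_{z,2^{-k-1}}|\leq C$, which permits $|\nabla\ell_{z,2^{-k}}|\sim Ck$, i.e.\ log-Lipschitz behaviour, and some extra input is needed to rule it out. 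The paper supplies this with Lemma~\ref{lemma:den}: once $|\nabla\ell_{z,r}|\geq M$, the constraint $\norm{\nabla u}_{L^\infty(D)}\leq 1$ forces the density of $D$ in $B_{r/2}(z)$ to drop by a definite factor (up to a Dini error), so that along dyadic scales $|D_{z,2^{-j}r}|$ decays geometrically. This quantitative thinning of $D$ is what makes the source term $F$ go to zero fast enough in $L^2$-average---not merely stay bounded---so that the Dini-coefficient $C^1$ theory of \cite{Li17} can be invoked to pin down $\nabla u(z)$. Your blowup does exploit $\norm{\nabla\tilde u_k}_{L^\infty(D_k)}\to 0$, but it never controls $|D_k\cap B_\rho|$, which is exactly where the argument must be closed; the coefficient-freezing and energy estimate you describe are also used in the paper (inside the proof of Lemma~\ref{lemma:den}), but by themselves they only reproduce the $BMO$ bound.
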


We also prove the parabolic counterpart of the above regularity theory for weak solutions of
\begin{equation}\label{eq:main-para}
\partial_t u = \ddiv ((A + (B-A)\chi_D)\nabla u)\quad\text{in }Q_1 = B_1\times(-1,0), 
\end{equation}
where $A$ and $B$ are now also time-dependent, $\nabla u$ is the spatial gradient of $u$, and $D$ is an open subset in $\R^{n+1}$. We call $u$ a weak solution of \eqref{eq:main-para} in $Q_1$, provided
$$u\in L^\infty((-1,0); L^2(B_1;\R^m)) \cap L^2((-1,0); W^{1,2}(B_1;\R^m)) $$
and
$$
\int_{Q_1} \left(a_{ij}^{\alpha\beta} + \left(b_{ij}^{\alpha\beta}-a_{ij}^{\alpha\beta}\right)\chi_D \right) \partial_\beta u^j \partial_\alpha \vp^i \,dX = \int_{Q_1} u^i \partial_t \vp^i\,dX, 
$$
for any $\vp \in  W^{1,2}((-1,0); L^2(B_1;\R^m))\cap L^2((-1,0); W^{1,2}(B_1;\R^m)) $ with $\vp(\cdot,-1) = \vp (\cdot,0) = 0$ on $B_1$. 

\begin{theorem}\label{theorem:parabolic}
Let $D\subset\R^{n+1}$ be an open set, and let $A, B : Q_1 \ra \R^{n^2m^2}$ satisfy \eqref{eq:a-ellip}, \eqref{eq:a-bd}, and \eqref{eq:a-dini} (with $B_1$ replaced by $Q_1 = B_1\times (-1,0)$). Suppose that $u$ is a weak solution of \eqref{eq:main-para} in $Q_1$ satisfying $\nabla u\in L^\infty(Q_1\cap D)$. Then for any $X,Y\in Q_{1/2}$ with $X\neq Y$, 
\begin{equation}\label{eq:lip-para}
\frac{|u(X) - u(Y)|}{d_p(X,Y)} \leq C \left( \esssup_{t\in(-1,0)} \norm{u(\cdot,t)}_{L^2(B_1)} + \norm{\nabla u}_{L^\infty(Q_1\cap D)} \right),
\end{equation}
where $C>0$ depends only on $n$, $m$, $\lambda$, $\Lambda$, and $\omega$. 
\end{theorem}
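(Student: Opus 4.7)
The plan is to mimic the proof of Theorem~\ref{theorem:elliptic}, replacing spatial balls $B_r$ by parabolic cylinders $Q_r(X_0)$ and carrying out all rescalings under the parabolic scaling $(x,t) \mapsto (rx, r^2 t)$. The core step is to establish a Campanato-type decay at every $X_0 \in Q_{1/2}$: there exist a constant $\rho \in (0, 1/2)$ (depending only on the structural parameters) and, for every small $r$, an affine spatial function $\ell_r^{X_0}(x) = a_r \cdot x + b_r$ with $|a_r|, |b_r| \le C M$ such that
\begin{equation*}
\left( \frac{1}{|Q_r(X_0)|} \int_{Q_r(X_0)} |u - \ell_r^{X_0}|^2\,dX \right)^{1/2} \le C M r,
\end{equation*}
where $M := \esssup_{t\in (-1,0)} \norm{u(\cdot,t)}_{L^2(B_1)} + \norm{\nabla u}_{L^\infty(Q_1\cap D)}$. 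Applying this at two points $X, Y \in Q_{1/2}$ with $r \asymp d_p(X, Y)$, together with the continuity of $u$ (inherited from parabolic regularity in $Q_1 \setminus \overline{D}$ and from $\nabla u \in L^\infty(D)$), then yields \eqref{eq:lip-para}.

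The decay is established by induction on dyadic scales $r_k = \rho^k r_0$ via a contradiction/compactness scheme. Assume the inductive step fails; one then extracts sequences $u_k$, $A_k$, $B_k$, $D_k$, $X_k$, $r_k \to 0$ and normalized parabolic rescalings $v_k$ that: $(a)$ solve the transmission system with coefficients $A_k, B_k$ which, by Dini continuity, converge uniformly on compact sets to constant matrices $A_0, B_0$; $(b)$ satisfy $\norm{\nabla v_k}_{L^\infty(Q_R \cap D_k)} \le C_R$ for every $R$; and $(c)$ violate proximity to any controlled affine-in-$x$ function at the previous scale. Parabolic Caccioppoli estimates give compactness, and a subsequence converges in $L^2_{\mathrm{loc}}(\R^{n+1})$ (weakly in $W^{1,2}_{\mathrm{loc}}$) to a global $v_\infty$ solving the constant-coefficient transmission problem on a limiting open set $D_\infty$, with $\nabla v_\infty \in L^\infty(D_\infty)$ and at most linear parabolic growth at infinity. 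A Liouville-type result---the parabolic analog of the one used for Theorem~\ref{theorem:elliptic}---then forces $v_\infty$ to be an affine spatial function, contradicting~$(c)$ and closing the induction.

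The main obstacle is precisely this parabolic Liouville step: a global weak solution of the constant-coefficient parabolic transmission system, with at most linear parabolic growth at infinity and $\nabla v_\infty \in L^\infty(D_\infty)$ for some arbitrary open $D_\infty \subset \R^{n+1}$, must be affine in $x$ and independent of $t$. With no regularity of $\partial D_\infty$ and only one-sided gradient control, this requires combining interior parabolic Schauder estimates in $\R^{n+1} \setminus \overline{D_\infty}$ (where the equation has smooth constant coefficients) with the one-sided Lipschitz bound on $D_\infty$, and a secondary rescaling at infinity to extract the affine limit---the same combination used in the elliptic Liouville underlying Theorem~\ref{theorem:elliptic}. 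Beyond this, the parabolic structure contributes only standard technical modifications: Steklov-type averaging to handle the time-derivative term when passing to the limit in the weak formulation, and consistent use of the $(x,t) \mapsto (rx, r^2 t)$ scaling throughout the energy and compactness estimates.
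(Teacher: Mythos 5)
Your proposal is built on a compactness/Liouville scheme, which is genuinely different from what the paper does, and the load-bearing step of your scheme is not established. Contrary to what you assert, neither the proof of Theorem~\ref{theorem:parabolic} nor the elliptic proof of Theorem~\ref{theorem:elliptic} invokes any Liouville theorem for global transmission solutions. The paper proceeds entirely quantitatively: Lemma~\ref{lemma:bmo-para} gives a log-Lipschitz bound $|\nabla\ell_{Z,r}|\le C|\log r|$ by freezing coefficients and using the Dini hypothesis; Lemma~\ref{lemma:decay-para} (the parabolic analogue of Lemma~\ref{lemma:den}) shows that whenever $|\nabla\ell_{Z,r}|\ge M$ the measure of the rescaled set $D_{Z,r}$ decays geometrically under dyadic halving; in the resulting dichotomy, this geometric measure decay forces the forcing term $F=(A-B)\chi_D\nabla u$ and the freezing error to decay in $L^2$ at a Dini rate, so that the iterative $C^1$ perturbation lemma (Lemma~\ref{lemma:C1-para}, itself proved by approximation, not compactness) can be applied at scale $2^{-k_0}$ and gives the Lipschitz bound. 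The $\frac12$-H\"older bound in time is then deduced separately from the spatial Lipschitz bound and \eqref{eq:bmo-para}.

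The gap in your argument is the parabolic Liouville theorem you describe as ``the same combination used in the elliptic Liouville underlying Theorem~\ref{theorem:elliptic}.'' No such statement appears in, or is used by, the paper, and proving it is essentially as hard as the original theorem rather than a simpler building block. Your blow-up limit $v_\infty$ would solve a constant-coefficient transmission system on an \emph{arbitrary} open $D_\infty\subset\R^{n+1}$, with only one-sided gradient control $\nabla v_\infty\in L^\infty(D_\infty)$ and linear parabolic growth. Interior parabolic estimates on $\R^{n+1}\setminus\overline{D_\infty}$ only yield $|\nabla v_\infty(X)|\le C(|X|+1)/\dist_p(X,\partial D_\infty)$, which may blow up near the interface; nothing in your setup prevents configurations where $D_\infty$ is thin near the base point yet $\nabla v_\infty$ is unbounded on the complement. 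This is exactly the difficulty the original problem poses, now transplanted to all of $\R^{n+1}$ with even less structure, so invoking it as a lemma does not close the induction. The paper avoids the issue precisely by never passing to a compactness limit: the density-decay alternative is a finite, quantitative statement, and the Dini condition on $\omega$ is what makes the dyadic error sums in \eqref{eq:den-m} and in the definition of $\psi$ in \eqref{eq:psi} converge.
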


\begin{remark}
Theorems \ref{theorem:elliptic} and \ref{theorem:parabolic} can be easily extended to the case when the right hand sides of \eqref{eq:trans} and \eqref{eq:main-para} are replaced with $\ddiv F$, for some Dini continuous mapping $F$. Here we treat the homogeneous right hand side only for the sake of simplicity. We shall leave such a generalization to the interested reader.  
\end{remark}

\subsection{Organisation of the paper}

The paper is organised as follows. In Section \ref{section:sys-lin} and \ref{section:proof-parabolic}, we prove respectively Theorem \ref{theorem:elliptic} and Theorem \ref{theorem:parabolic}. In Section \ref{discuss}, we shall discuss some relevant problems at a heuristic level, and present some open questions for the interested reader. In Appendix, we include some technical lemmas.


\section{Proof of Theorem~\ref{theorem:elliptic}: elliptic case  }\label{section:sys-lin}

As mentioned before, the analysis here follows closely Sections 2.1 and 2.2 in \cite{FS14}. To simplify the exposition, we shall assume, in addition to the assumptions in Theorem \ref{theorem:elliptic}, that 
\begin{equation}\label{eq:Du-Linf-D}
\norm{u}_{L^2(B_1)} + \norm{\nabla u}_{L^\infty(D\cap B_1)}  \leq 1,
\end{equation}
unless stated otherwise. The general case can be recovered by considering 
$$
\tilde u = \frac{ u }{ \norm{u}_{L^2(B_1)} + \norm{\nabla u}_{L^\infty(D\cap B_1)}}.
$$

\begin{lemma}\label{lemma:bmo}
Under the assumption of Theorem \ref{theorem:elliptic} and \eqref{eq:Du-Linf-D}, one has $\nabla u\in BMO(B_{3/4};\R^{mn})$, and $u\in C^\alpha(B_{3/4};\R^m)$ for any $\alpha\in(0,1)$. In particular, for each $z\in B_{1/2}$ and any $r\in(0,\frac{1}{4})$, there exists a vectorial affine function $\ell_{z,r}$ such that 
\begin{equation}\label{eq:Du-bmo}
\int_{B_r(z)} |\nabla u - \nabla \ell_{z,r}|^2\,dx \leq C r^n, 
\end{equation} 
where $C>0$ depends only on $n$, $m$, $\lambda$, $\Lambda$, and $\omega$. 
\end{lemma}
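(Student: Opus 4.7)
The bound \eqref{eq:Du-bmo} is the Campanato characterization of $\nabla u\in BMO(B_{3/4};\R^{mn})$; once it is established, $u\in C^\alpha(B_{3/4};\R^m)$ for every $\alpha<1$ follows by combining John--Nirenberg (which upgrades $\nabla u$ to $L^p_{\mathrm{loc}}$ for all finite $p$) with the Morrey--Sobolev embedding. The plan is therefore to prove \eqref{eq:Du-bmo} via the standard freezing-and-iteration scheme for divergence-form equations with Dini-continuous coefficients, adapted to absorb the transmission term on $D$.

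Fix $z\in B_{1/2}$ and $r\in(0,1/4)$, and let $v\in W^{1,2}(B_r(z);\R^m)$ solve the constant-coefficient system $\ddiv(A(z)\nabla v)=0$ in $B_r(z)$ with $v=u$ on $\partial B_r(z)$. Because $A(z)$ is a constant elliptic matrix, $v$ satisfies the classical Campanato decay
$$\int_{B_\rho(z)}|\nabla v-(\nabla v)_{z,\rho}|^2\,dx\le C_0(\rho/r)^{n+2}\int_{B_r(z)}|\nabla v-(\nabla v)_{z,r}|^2\,dx$$
for every $\rho\in(0,r]$, while the discrepancy $w:=u-v\in W_0^{1,2}(B_r(z);\R^m)$ solves
$$\ddiv(A(z)\nabla w)=\ddiv\bigl([A(z)-A-(B-A)\chi_D]\nabla u\bigr)\quad\text{in }B_r(z).$$
Testing with $w$, invoking ellipticity of $A(z)$, and splitting the right-hand side into its contributions over $B_r(z)\setminus D$ (where $|A(z)-A(x)|\le\Lambda\omega(r)$ by Dini continuity) and over $B_r(z)\cap D$ (where $|\nabla u|\le 1$ by assumption and $|B_r(z)\cap D|\le|B_r|$ trivially), Cauchy--Schwarz and Young's absorption deliver
$$\int_{B_r(z)}|\nabla w|^2\,dx\le C\omega(r)^2\int_{B_r(z)}|\nabla u|^2\,dx+Cr^n,$$
whose second summand---the genuine transmission contribution---sits exactly at the BMO scale.

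Combining the Campanato decay for $\nabla v$ with this error bound via $\nabla u=\nabla v+\nabla w$, and controlling $\int_{B_r}|\nabla u|^2=E(z,r)+|B_r|\,|(\nabla u)_{z,r}|^2$ through the global energy bound $\int_{B_1}|\nabla u|^2\,dx\le C$ coming from the normalisation \eqref{eq:Du-Linf-D}, one arrives at a recursive inequality of the form
$$E(z,\tau r)\le C_1\tau^{n+2}E(z,r)+C_2\bigl(\omega(r)^2+1\bigr)r^n$$
for the Campanato deficit $E(z,r):=\inf_{c\in\R^{mn}}\int_{B_r(z)}|\nabla u-c|^2\,dx$ and any $\tau\in(0,1)$. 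Choosing $\tau$ small so that $C_1\tau^2\le 1/2$ and iterating along a geometric sequence $r_k=\tau^kr_0$, a Dini-type iteration lemma (of the kind collected in the Appendix) uses the summability $\int_0^1\omega(r)/r\,dr<\infty$ to keep the cumulative error confined to the $r^n$ scale, yielding $E(z,r)\le Cr^n$ for every $r\in(0,1/4)$, which is \eqref{eq:Du-bmo}.

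The principal technical obstacle is to close the iteration without imposing any regularity on $\partial D$: the whole interaction across the two phases is captured exclusively through the crude measure estimate $|B_r(z)\cap D|\le|B_r|$ combined with the pointwise assumption $\|\nabla u\|_{L^\infty(D)}\le 1$, which contributes precisely at the $r^n$ scale. This is exactly the BMO threshold and explains why the present lemma delivers only BMO rather than the $L^\infty$ control on $\nabla u$ that Theorem~\ref{theorem:elliptic} ultimately requires; upgrading to full Lipschitz regularity will be achieved in the remainder of the section through further dichotomy arguments for which \eqref{eq:Du-bmo} supplies the essential starting input.
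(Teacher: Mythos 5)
The paper's own proof is essentially a one-liner: it rewrites the equation as $\ddiv(A\nabla u)=\ddiv F$ with $F=(A-B)\chi_D\nabla u\in L^\infty(B_1;\R^{mn})$ (using \eqref{eq:a-bd} and \eqref{eq:Du-Linf-D}) and then invokes Theorem~4.1 of \cite{Acq92}, the BMO-regularity theory for divergence-form elliptic systems with Dini coefficients, together with the choice $\nabla\ell_{z,r}=(\nabla u)_{z,r}$. You instead set out to re-derive the underlying Campanato iteration from scratch; that is a legitimate and more self-contained route, and your reduction to $\ddiv F$ with $L^\infty$ data across $D$, the freezing at $A(z)$, the harmless use of $|B_r(z)\cap D|\le|B_r|$, and the passage from BMO to $C^\alpha$ via John--Nirenberg and Morrey embedding are all sound.

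There is, however, a genuine gap in the iteration. Your energy estimate on the discrepancy reads
\begin{equation*}
\int_{B_r(z)}|\nabla w|^2\,dx\ \le\ C\omega(r)^2\int_{B_r(z)}|\nabla u|^2\,dx+Cr^n,
\end{equation*}
and you then claim this yields the clean recursion $E(z,\tau r)\le C_1\tau^{n+2}E(z,r)+C_2(\omega(r)^2+1)r^n$ by ``controlling $\int_{B_r}|\nabla u|^2=E(z,r)+|B_r|\,|(\nabla u)_{z,r}|^2$ through the global energy bound.'' But the global energy bound only gives $\int_{B_r}|\nabla u|^2\le C$, which is of order one, not of order $r^n$; so the term $\omega(r)^2\int_{B_r}|\nabla u|^2$ is \emph{not} bounded by $C\omega(r)^2 r^n$, and the displayed recursion does not follow. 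What the computation actually produces as error is $C\omega(r)^2 E(z,r)+C\omega(r)^2 r^n|(\nabla u)_{z,r}|^2+Cr^n$, and the middle term requires a separate inductive control of the averages: telescoping $(\nabla u)_{z,r_k}$ along the dyadic scales using the inductive bound $E(z,r_j)\le\beta r_j^n$ gives at best $|(\nabla u)_{z,r_k}|\lesssim 1+\beta^{1/2}k$, i.e., a logarithmic growth. Closing the iteration then hinges on the fact that a Dini modulus satisfies $\omega(r)|\log r|\to 0$ --- precisely Lemma~\ref{lemma:dini} in the Appendix, not mere summability of $\omega(r)/r$ --- so that $\omega(r_k)^2 k^2\to 0$ and the cumulative error stays at scale $r^n$. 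As written, your proposal skips this coupling between the deficit and the growth of the averages and asserts a recursion that has not been established; indeed, if your stated recursion were true, one would not need the Dini condition at all (boundedness of $\omega$ would already suffice), which is a sign that something has been lost. Either incorporate the $\omega(r)|\log r|\to 0$ bootstrap explicitly, or, as the paper does, cite \cite[Theorem 4.1]{Acq92} where this bookkeeping is carried out.
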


\begin{proof}
Note that, due to the assumptions \eqref{eq:a-bd} and \eqref{eq:Du-Linf-D}, our equation \eqref{eq:main} can be written as
$$\ddiv (A\nabla u) = \ddiv F,\qquad \text{with }F = (A-B)\chi_D \nabla u \in L^\infty(B_1;\R^{mn}).$$ Hence, choosing a vectorial affine function $\ell_{z,r}$ satisfying $\nabla \ell_{z,r} = (\nabla u)_{z,r}$, the conclusion follows by ellipticity regularity theory (e.g., Theorem 4.1 in \cite{Acq92}), thanks to the assumption that $A$ is Dini continuous on $B_1$. 
\end{proof} 

The following lemma is the analogue of Proposition 2.4 in \cite{FS14}.
We need the following definition:
\begin{equation}\label{eq:A}
D_{z,r} = \{ x\in B_1: rx + z\in B_r(z)\cap D\},
\end{equation} 
and $D_r = D_{0,r}$. 

\begin{lemma}\label{lemma:den}
Assume that $\omega(1)\leq\frac{1}{2}$, $z\in B_{1/2}$, $r\in(0,\frac{1}{4})$, and let $\ell_{z,r}$ be as in Lemma \ref{lemma:bmo}. There exist constants $c>1$ and $M>1$, depending only on $n$, $m$, $\lambda$, $\Lambda$, and $\omega$, such that if $|\nabla \ell_{z,r}| \geq M$, then 
\begin{equation}\label{eq:den}
|D_{z,r/2}| \leq \frac{|D_{z,r}|}{2^n} + c\,\omega(r)^{3n}.
\end{equation}
\end{lemma}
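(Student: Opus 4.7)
The plan is to rescale around $z$, compare the rescaled function to a frozen-coefficient harmonic replacement, and exploit the interaction between the Lipschitz bound on $D$ and the hypothesis $|\nabla \ell_{z,r}|\geq M$. Concretely, set $L := \nabla \ell_{z,r}$ and define
\[
\bar u(x) := \frac{u(z + rx) - \ell_{z,r}(z + rx)}{r|L|}, \qquad x \in B_1.
\]
By Lemma~\ref{lemma:bmo} together with Poincar\'e, $\|\bar u\|_{L^2(B_1)} + \|\nabla \bar u\|_{L^2(B_1)} \leq C/|L| \leq C/M$. On $D_{z,r}$ the Lipschitz bound yields $|\nabla \bar u(x)| = |\nabla u(z+rx) - L|/|L| \geq 1 - 1/M \geq 1/2$ as soon as $M\geq 2$. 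Moreover, since $L$ is constant and $\ddiv(A(z)\,L/|L|)=0$, the rescaled $\bar u$ solves an equation of divergence form in $B_1$ that can be written as $\ddiv(\tilde A\,\nabla \bar u) = \ddiv(F_1 + F_2)$, where $\tilde A(x) = (A+(B-A)\chi_{D})(z+rx)$ is uniformly elliptic and bounded, $|F_1|\leq C\omega(r)$ (coming from Dini continuity of $A$), and $|F_2|\leq C\chi_{D_{z,r}}$ (coming from the jump across $\partial D$).

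Next I would introduce $\bar h$, the $A(z)$-harmonic replacement of $\bar u$ in $B_1$, i.e., the solution of $\ddiv(A(z)\nabla \bar h)=0$ with $\bar h = \bar u$ on $\partial B_1$. A standard energy estimate for $\bar w:= \bar u - \bar h$ (which vanishes on $\partial B_1$) gives
\[
\|\nabla \bar w\|_{L^2(B_1)} \leq C\bigl(\omega(r) + |D_{z,r}|^{1/2}\bigr),
\]
since $|\nabla \bar u|$ is uniformly bounded on $D_{z,r}$. In addition, Chebyshev's inequality combined with $|\nabla \bar u|\geq 1/2$ on $D_{z,r}$ and $\|\nabla \bar u\|_{L^2(B_1)}\leq C/M$ forces the a priori bound $|D_{z,r}| \leq C/M^2$. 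Interior gradient estimates for the constant-coefficient system then yield $\|\nabla \bar h\|_{L^\infty(B_{1/2})} \leq C\|\bar h\|_{L^2(B_1)}\leq C(1/M+\omega(r))$, which is $<1/4$ for $M$ sufficiently large and $r$ small. Consequently $|\nabla \bar u - \nabla \bar h|\geq 1/4$ on $D_{z,r}\cap B_{1/2}$, and integrating over this set gives a bound on $|D_{z,r}\cap B_{1/2}|$, which translates via the identity $|D_{z,r/2}|=2^n|D_{z,r}\cap B_{1/2}|$ into a bound on $|D_{z,r/2}|$.

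The \textbf{main obstacle} is the sharp form of the bound: the naive $L^2$-energy argument sketched above produces an estimate of the shape $|D_{z,r}\cap B_{1/2}|\leq C(\omega(r)^2 + |D_{z,r}|)$, which has both the wrong prefactor on $|D_{z,r}|$ (a generic constant instead of $4^{-n}$) and the wrong Dini exponent (a power $2$ instead of $3n$). Recovering the precise decay requires refining the comparison with $\bar h$: I would perform a Taylor expansion of $\bar h$ at the origin and identify its affine part with the next-scale affine approximation $\ell_{z,r/2}$, so that subtracting this affine part from $\bar u$ leaves only an error that scales naturally with the density factor $|D_{z,r}|/2^n$. To reach the Dini exponent $3n$ on the remainder term, I expect to need a higher-integrability $L^p$-Calder\'on--Zygmund estimate (with $p$ of order $3n$) applied to the source $F_1+F_2+(A(z)-\tilde A)\nabla \bar u$, combined with the a priori smallness $|D_{z,r}|\leq C/M^2$ to absorb remaining lower-order terms.
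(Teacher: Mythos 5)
Your setup is essentially the paper's: rescale around $z$, freeze the coefficient at the center to build a harmonic replacement, and exploit the Lipschitz bound on $D$ against the hypothesis $|\nabla\ell_{z,r}|\geq M$. The extra normalisation by $|L|$ is a cosmetic difference. You are also right that the naive $L^2$ energy argument only yields $|D_{z,r}\cap B_{1/2}|\leq C(\omega(r)^2+|D_{z,r}|)$, which is useless, and you correctly guess that an $L^p$ Calder\'on--Zygmund estimate with $p$ of order $3n$ is the missing ingredient. That is indeed what the paper uses (via Theorem 7.2 in Giaquinta--Martinazzi).

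However, the refinement you sketch misattributes where the improvement comes from, and as written it leaves the key step unjustified. Neither a ``Taylor expansion of $\bar h$ identified with $\ell_{z,r/2}$'' nor the a priori smallness $|D_{z,r}|\leq C/M^2$ plays any role; the next-scale affine function does not appear anywhere in this lemma, and the $|D_{z,r}|\leq C/M^2$ observation is never used. The actual mechanism is much more direct and you already have all the pieces. Apply the interior $L^p$-CZ estimate (with Dini coefficients $A_r$, not the frozen $A(z)$ -- keeping $A(z)$ makes the source term $(A(z)-\tilde A)\nabla\bar u$ circular) to $\bar w=\bar u-\bar h\in W_0^{1,2}(B_1)$, whose source in $L^p$ is controlled by $C_p\bigl(|D_{z,r}|/M^p+\omega(r)^p\bigr)$; the crucial $1/M^p$ factor is inherited automatically from your normalisation, since $|F_2|=|(A-B)\chi_{D}\nabla u_r|/|L|\leq C\chi_{D_{z,r}}/M$. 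Then Chebyshev with the lower bound $|\nabla\bar w|\geq\tfrac12$ on $D_{z,r}\cap B_{1/2}$ gives
\begin{equation*}
|D_{z,r}\cap B_{1/2}|\,2^{-p}\ \leq\ \int_{B_{1/2}}|\nabla\bar w|^p\,dx\ \leq\ C_p\bigl(|D_{z,r}|/M^p+\omega(r)^p\bigr),
\end{equation*}
so that with $p=3n$ and $M^{3n}$ chosen to beat $C_{3n}2^{3n}\cdot 2^{2n}$, the prefactor on $|D_{z,r}|$ drops below $2^{-2n}$ and multiplying by $2^n=|D_{z,r/2}|/|D_{z,r}\cap B_{1/2}|$ gives \eqref{eq:den}. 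This is exactly the paper's inequality \eqref{eq:211} after dividing by $|\nabla\ell_r|^{3n}$; your normalisation simply performs that division at the start rather than the end. So the proposal identifies the right tool but fails to spell out the mechanism, and the two ideas you offer in its place (Taylor matching with $\ell_{z,r/2}$ and absorption via $|D|\leq C/M^2$) would not produce the geometric factor $2^{-n}$ on $|D_{z,r}|$.
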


\begin{proof}
Throughout the proof, $C$ and $C_p$ will be  universal  constants,  depending only on $n$, $m$, $\lambda$, $\Lambda$, and $\omega$, with $C_p$ further depending on $p$, and they may vary from one appearance to another. With no loss of generality, we can assume $z = 0$.

Fix $r\in(0,\frac{1}{2})$, and let $\ell_r = \ell_{0,r}$ be a vectorial affine function satisfying \eqref{eq:Du-bmo}. In what follows, we shall write 
\begin{equation}\label{eq:ur}
u_r(x) = \frac{u(rx)}{r}.
\end{equation}

Let $v_r$ be the weak solution of 
\begin{equation}
\label{eq:frozen}
\begin{dcases}
\ddiv (A(0) \nabla v_r ) = 0\quad \text{in }B_1,\\
v_r - (u_r - \ell_r) \in W_0^{1,2}(B_1;\R^m). 
\end{dcases}
\end{equation} 
Then the interior gradient estimate for constant elliptic systems, followed by the Poincar\'e inequality, yields 
\begin{equation}\label{eq:Dvr-Linf-1}
\norm{\nabla v_r}_{L^\infty(B_{2/3})}  \leq C\norm{v_r - (v_r)_1}_{L^2(B_1)} \leq C\norm{\nabla v_r}_{L^2(B_1)}
\end{equation}
(recall that $(v_r)_1$ denotes the average of $v_r$ over $B_1$).
Using $v_r - (u_r - \ell_r)\in W_0^{1,2}(B_1;\R^m)$ as a test function in \eqref{eq:frozen}, we obtain 
$$
\begin{aligned} 
\lambda \norm{\nabla v_r}_{L^2(B_1)}^2 &\leq \int_{B_1} a_{ij}^{\alpha\beta}(0) \partial_\alpha v_r^i\partial_\beta v_r^j \,dx = \int_{B_1} a_{ij}^{\alpha\beta}(0) \partial_\alpha v_r^i \partial_\beta (u_r^j - \ell_r^j)\,dx \\
& \leq \frac{C}{\lambda} \norm{\nabla v_r}_{L^2(B_1)} \norm{\nabla (u_r - \ell_r)}_{L^2(B_1)},
\end{aligned} 
$$ 
and consequently 
 $$\norm{\nabla v_r}_{L^2(B_1)} \leq C\norm{\nabla (u_r - \ell_r)}_{L^2(B_1)}.$$ Combining this inequality with \eqref{eq:Dvr-Linf-1}, and then employing the $L^2$-$BMO$ estimate \eqref{eq:Du-bmo} for $\nabla u$, we arrive at 
\begin{equation}\label{eq:Dvr-Linf}
\norm{\nabla v_r}_{L^\infty(B_{2/3})}+\norm{\nabla v_r}_{L^2(B_1)} \leq C.
\end{equation} 
We now observe that the vector-valued function 
$$
w_r  = u_r - \ell_r  - v_r
$$
is a weak solution of 
$$
\begin{cases}
\ddiv ( A_r \nabla w_r) = \ddiv  (F_r + \phi_r) \quad \text{in }B_1,\\
w_r \in W_0^{1,2}(B_1;\R^m),
\end{cases}
$$
where 
\begin{equation}\label{eq:Ar-Fr}
\begin{aligned}
A_r(x) &= A(rx),\quad \phi_r(x) = (A_r(0) - A_r(x))(\nabla \ell_r + \nabla v_r), \\
B_r(x) &= B(rx),\quad\text{and}\quad F_r(x) = (A_r(x)-B_r(x))\chi_{D_r} \nabla u_r.
\end{aligned}
\end{equation}
Recalling \eqref{eq:Dvr-Linf}, we have
\begin{equation}\label{eq:phir-Linf}
\norm{\phi_r}_{L^\infty(B_{2/3})} + \norm{\phi_r}_{L^2(B_1)} \leq  C\omega(r)( | \nabla \ell_r | + 1 ).
\end{equation}  
On the other hand,  the Lipschitz regularity assumption \eqref{eq:Du-Linf-D} on $u|_D$, together with \eqref{eq:a-bd}, imply that for any  $ 1 \leq p < \infty $
\begin{equation}\label{eq:Fr-Lp}
\int_{B_1} |F_r|^p\,dx \leq C_p |D_r|.
\end{equation}
Since $w_r = 0$ on $\partial B_1$, classical energy estimates combined with \eqref{eq:phir-Linf} and \eqref{eq:Fr-Lp} yield
\begin{equation}\label{eq:Dwr-L2}
\begin{split} 
\int_{B_1} |\nabla w_r|^2\,dx &\leq C\int_{B_1} |F_r + \phi_r|^2\,dx \\
&\leq C \left( |D_r| +  \omega(r)^2( | \nabla \ell_r | + 1 )^2\right).
\end{split}
\end{equation}
Next, it follows from the local $L^p$-theory (Theorem 7.2 in \cite{GM12}), along with \eqref{eq:Dwr-L2}, \eqref{eq:phir-Linf}, and \eqref{eq:Fr-Lp}, that for each $p> n$ it holds
\begin{equation}\label{eq:Dwr-Lp}
\begin{split}
\int_{B_{1/2}} |\nabla w_r|^p \,dx &\leq C_p\left( \norm{\nabla w_r}_{L^2(B_{2/3})}^p + \int_{B_{2/3}} |F_r + \phi_r|^p\,dx \right) \\
&\leq C_p \big( |D_r| + ( \omega(r))^p  \left( | \nabla\ell_r | + 1 \right)^p\big).
\end{split}
\end{equation}   
Finally, combining \eqref{eq:Dvr-Linf}, \eqref{eq:Dwr-Lp}, and \eqref{eq:Du-Linf-D}, and using that  $$\nabla (v_r + w_r) = \nabla u_r - \nabla \ell_r,$$ for any $p>n$ we obtain 
\begin{equation}\label{eq:211}
\begin{split}
|D_r\cap B_{\frac{1}{2}}| |\nabla \ell_r|^p  &\leq \int_{D_r\cap B_{\frac{1}{2}}}\bigl(|\nabla v_r|+|\nabla w_r|+|\nabla u_r|\bigr)^p dx\\
& \leq 3^p\int_{D_r\cap B_{\frac{1}{2}}}\bigl(|\nabla v_r|^p+|\nabla w_r|^p+|\nabla u_r|^p\bigr) dx\\
& \leq C_p\big( |D_r| + \omega(r)^p (|\nabla \ell_r| + 1)^p \big).
\end{split}
\end{equation} 
Finally, we choose $p = 3n$ and $M = (2^{2n} C_{3n})^{1/(3n)}$ in the statement of the proposition,
where $C_{3n}$ is the constant appearing in the last line of \eqref{eq:211} with $p=3n$.
In this way, we have by assumption that $|\nabla \ell_r|^{3n} \geq M^{3n} = 2^{2n}C_{3n}$. Hence, dividing by $|\nabla \ell_r|^{3n}$ both sides of \eqref{eq:211}, and using the relation $|D_{r/2}| = 2^n |D_r \cap B_{1/2}|$, we get
$$
|D_{r/2}|\leq \frac{1}{2^n} |D_r| + C \omega(r)^{3n}.
$$ 
This finishes the proof. 
\end{proof}

Now we are ready to prove the main theorem of this section. With Lemma~\ref{lemma:den} at hand, one can proceed as in the proof of Theorem 1.1 in \cite{FS14}, with some modification due to the dependence of $A$ on $x$. 

\begin{proof}[Proof of Theorem~\ref{theorem:elliptic}]
As discussed before, we can assume that $u$ satisfies \eqref{eq:Du-Linf-D}.
Also, up to rescaling, we can assume that $\omega(1)\leq \frac{1}{2}$.

We shall prove that, for every Lebesgue point $z\in B_{1/2}\setminus \overline D$ of $\nabla u\in L^2(B_1)$,  it holds
\begin{equation}\label{eq:lip-re}
|\nabla u(z)| \leq C_0M,
\end{equation}
where $C_0>1$ depends only on $n$, $m$, $\lambda$, $\Lambda$, and $\omega$. Since almost every point in $B_{1/2}\setminus\overline D$ is a Lebesgue point of $\nabla u$, this will conclude the proof.

Without loss of generality, we can assume that $z= 0$. For $r\in(0,\frac{1}{2})$, we denote by $\ell_r$ a vectorial affine function as in Lemma \ref{lemma:bmo}. As in the proof of Theorem 1.1 in \cite{FS14}, we split the argument  into two cases: 
\begin{enumerate}[({Case} 1)]
\item $\liminf\limits_{k\ra\infty}  \left| \nabla \ell_{2^{-k}} \right| < 2M$,
\item $\liminf\limits_{k\ra\infty}  \left| \nabla \ell_{2^{-k}} \right| \geq 2M$,
\end{enumerate}
where $M>1$ is the large constant chosen from Lemma~\ref{lemma:den}. 

In what follows, we shall denote by $C$ a generic constant that depends only on $n$, $m$, $\lambda$, $\Lambda$, and $\omega$, which may vary upon each occasion. 

In Case 1, the result follows immediately from the $L^2$-$BMO$ estimate \eqref{eq:Du-bmo}, and the assumption that the origin is a Lebesgue point of $\nabla u$. 

In Case 2, we define $k_0\in\N$ as
\begin{equation*}
k_0 = \min\{ k \in \N : |\nabla \ell_{2^{-j}}| \geq M \text{ for any }j\geq k \}.
\end{equation*}
In virtue of  Caccioppoli's inequality for \eqref{eq:main} and \eqref{eq:Du-Linf-D}, we know that $\norm{\nabla u}_{L^2(B_{1/2})} \leq C$, so it follows by \eqref{eq:Du-bmo} that $$|\nabla \ell_{2^{-1}}| \leq \norm{\nabla u}_{L^2(B_{1/2})} + C \leq 2C.$$ Hence, by taking $M$ larger if necessary, we can ensure that $k_0\geq 2$, 

By the definition of $k_0$, we have $|\nabla \ell_{2^{-k_0 + 1}}| < M$. Also, \eqref{eq:Du-bmo} implies that $|\nabla \ell_r - \nabla \ell_{r/2}| \leq C$ for any $r\in(0,\frac{3}{5})$. Thus
\begin{equation}\label{eq:Du-avg}
|\nabla \ell_{2^{-k_0}}| \leq C + M. 
\end{equation} 
On the other hand, since 
$$
|\nabla \ell_{2^{-k_0 - j}}| \geq M,\quad\text{for any }j\in\N,
$$ 
we can apply \eqref{eq:den} at each level $r = 2^{-k_0 - j}$ to get
\begin{equation}\label{eq:den-m}
|D_{2^{-k_0 - j}}| \leq C \left( 2^{-jn}+ \sum_{i = 0}^{j-1} 2^{-in} \omega(2^{-k_0 - j + i})^{3n} \right),
\end{equation} 
where we also used $|D_{2^{-k_0}}|\leq |B_1|$. 

Without loss of generality, assume that $u(0) = \ell_r(0) = 0$, let $u_r $ be as in \eqref{eq:ur}, and define
\begin{equation*}
\hat w_r = u_r - \ell_r, 
\end{equation*}
Thanks to \eqref{eq:Du-bmo} and Poincar\'e inequality, we have 
\begin{equation}\label{eq:whr-W12}
\norm{\hat w_r}_{{W^{1,2}}(B_1)} \leq C.
\end{equation}
Moreover, $\hat w_r$ is a weak solution of 
\begin{equation}\label{eq:whr-pde}
\ddiv (A_r \nabla \hat w_r) = \ddiv (F_r + \hat \phi_r) \quad\text{in }B_1,
\end{equation}
where $A_r$ and $F_r$ are as in \eqref{eq:Ar-Fr}, while
$$
\begin{aligned}
\hat \phi_r = (A_r(0) - A_r)\nabla \ell_r. 
\end{aligned}
$$
Thanks to \eqref{eq:a-bd}, \eqref{eq:Du-Linf-D}, \eqref{eq:den-m}, and the scaling relation $|D_r \cap B_{2^{-j}}| = 2^{-jn} |D_{2^{-j}r}|$,  for any integer $j\geq 1$ we obtain
\begin{equation*}
\begin{aligned}
\int_{B_{2^{-j}}} |F_{2^{-k_0}}|^2 \,dx & \leq C |D_{2^{-k_0}} \cap B_{2^{-j}}| \\
& \leq 2^{-jn}C \left( 2^{-jn}+ \sum_{i = 0}^{j-1} 2^{-jn} \omega(2^{-k_0 - j + i})^{3n} \right). 
\end{aligned}
\end{equation*}
In addition, it follows from \eqref{eq:a-dini} and \eqref{eq:Du-avg} that 
$$
\int_{B_{2^{-j}}} |\hat \phi_{2^{-k_0}}|^2\,dx \leq 2^{-jn}C\, \omega(2^{-k_0 - j})^2 (1+M)^2. 
$$
Combining these two estimates together, we arrive at 
\begin{equation}\label{eq:F-L2}
\int_{B_\rho}  |F_{2^{-k_0}} + \hat \phi_{2^{-k_0}}|^2\,dx \leq CM^2\rho^n\psi(\rho)^2,\quad\text{for all }\rho\in (0,1/2),
\end{equation} 
where 
\begin{equation}\label{eq:psi}
\psi(\rho) = \rho^{n/2}  + \left(\rho^n \int_\rho^1 \frac{\omega(\tau)^{3n}}{\tau^{n+1}}\,d\tau\right)^{1/2} + \omega(\rho).
\end{equation} 
Since $\omega$ is a Dini modulus of continuity, it follows from Lemma \ref{lemma:dini-more} (with $\alpha = \frac{3}{2}n > 1$) that $\int_0^{1/2} \rho^{-1}\psi(\rho)\,d\rho < \infty$. In addition, one can easily verify that $\psi(\rho)$ is non-decreasing in $\rho\in(0,\frac{1}{2})$ and $\lim\limits_{\rho\to 0}\psi(\rho) = 0$. Hence, $\psi$ as in \eqref{eq:psi} is also a Dini modulus of continuity. 

Recalling that $\hat w_{2^{-k_0}}$ is a weak solution of \eqref{eq:whr-pde} satisfying \eqref{eq:whr-W12}, one can deduce from \cite[Proposition 2.1, Remark 2.2]{Li17} along with \eqref{eq:F-L2} that 
\begin{equation}\label{eq:q}
\int_{B_\rho} |\hat w_{2^{-k_0}} - \hat\ell |^2 \,dx \leq CM^2\rho^{n+2} \psi_1(\rho)^2, \quad\text{for all }\rho\in(0,1/4),
\end{equation}
for certain modulus of continuity $\psi_1$ depending only on $\psi$, and some vectorial affine function $\hat\ell$ satisfying 
\begin{equation}\label{eq:q2}
|\hat\ell(0)| + |\nabla \hat\ell| \leq CM.
\end{equation}  
In view of \eqref{eq:whr-pde}, we have 
$$
\ddiv (A_{2^{-k_0}} \nabla (\hat w_{2^{-k_0}} - \hat \ell)) = \ddiv (F_{2^{-k_0}} + \tilde\phi_{2^{-k_0}})\quad\text{in }B_1, 
$$
in the weak sense, where 
$$
\tilde\phi_{2^{-k_0}} = \hat\phi_{2^{-k_0}} + (A_{2^{-k_0}}(0) - A_{2^{-k_0}})\nabla \hat\ell. 
$$
Hence, we can deduce from Caccioppoli inequality, \eqref{eq:a-dini}, \eqref{eq:F-L2}, \eqref{eq:q}, and \eqref{eq:q2}, that
$$
\begin{aligned}
 \int_{B_\rho} |\nabla \hat w_{2^{-k_0}} - \nabla \hat\ell|^2 \,dx &\leq C \int_{B_\rho} \left( \frac{|\hat w_{2^{-k_0}} - \hat\ell |^2}{\rho^2}+  |F_{2^{-k_0}} + \tilde\phi_{2^{-k_0}} |^2\right) dx   \\
&\leq CM^2 \rho^n\big( \psi_1(\rho)^2 + \psi(\rho)^2 + \omega(\rho)^2 \big) \\
&\leq CM^2 \rho^n,
\end{aligned} 
$$
for any $\rho\in(0,\rho_0)$, where $\rho_0$ is chosen so as to satisfy $\psi_1(\rho_0)^2 + \psi(\rho_0)^2 + \omega(\rho_0)^2 \leq 1$. Since $\nabla u_{2^{-k_0}} = \nabla \ell_{2^{-k_0}} + \nabla \hat w_{2^{-k_0}}$, we deduce from \eqref{eq:Du-avg},  \eqref{eq:q2}, and the last inequality, that 
$$
 \int_{B_\rho} | \nabla u_{2^{-k_0}}|^2\,dx \leq C \left( |\nabla \ell_{2^{-k_0}}|^2 \rho^n  +  \int_{B_\rho} |\nabla \hat w_{2^{-k_0}}|^2\,dx \right) \leq CM\rho^n,
 $$
for any $\rho\in(0,\frac{1}{2})$. Dividing by $\rho^n$  both sides, letting $\rho\to 0$, and recalling that the origin is a Lebesgue point of $\nabla u$ (and thus that of $\nabla u_{2^{-k_0}}$), we arrive at 
$$
|\nabla u(0)| = |\nabla u_{2^{-k_0}} (0)| \leq CM,
$$
proving \eqref{eq:lip-re} for $z=0$. 

Repeating this argument at any Lebesgue point $z\in B_{1/2} \setminus\overline D$, the proof is finished. 
\end{proof} 


\section{Proof of Theorem~\ref{theorem:parabolic}: parabolic case  }\label{section:proof-parabolic}

This section is concerned with transmission problems of uniformly parabolic systems, 
\begin{equation}\label{eq:trans-para}
\partial_t u = \ddiv ((A + (B- A)\chi_D)\nabla u)\quad \text{in }Q_1, 
\end{equation}
where $A = (a_{ij}^{\alpha\beta})_{1\leq i,j\leq \leq m}^{1\leq \alpha,\beta\leq n}$ and $B = (b_{ij}^{\alpha\beta})_{1\leq i,j\leq \leq m}^{1\leq \alpha,\beta\leq n}$ are assumed to verify \eqref{eq:a-ellip}, \eqref{eq:a-bd}, and \eqref{eq:a-dini}, with $B_1$ replaced by the unit parabolic cube, $Q_1 = B_1\times (-1,0)\subset\R^{n+1}$; in particular, the Dini continuity \eqref{eq:a-dini}, should now be understood in the parabolic terminology, i.e.,
$$
|a_{ij}^{\alpha\beta} (X) - a_{ij}^{\alpha\beta}(Y) |\leq \Lambda\, \omega(d_p(X,Y)), 
$$
for any $X = (x,t), Y= (y,s)\in Q_1$, where $d_p(X,Y) = \sqrt{|x-y|^2 + |t-s|}$ is the parabolic distance between $X$ and $Y$.

Most of the argument follows Section~\ref{section:sys-lin} and \cite{FS15}. We shall focus on the part that requires new ideas, and omit the arguments that can be derived from the previous section with minor modification.

Analogously to the elliptic case, in addition to the assumptions of Theorem~\ref{theorem:parabolic}, we can always suppose that
\begin{equation}\label{eq:u-lip-para}
\esssup_{t\in(-1,0)} \int_{B_1} |u(x,t)|^2\,dx+ \norm{\nabla u}_{L^\infty(D\cap Q_1)} \leq 1.
\end{equation} 

Let us begin with the $\log$-Lipschitz type estimate.  

\begin{lemma}\label{lemma:bmo-para}
There exists a positive constant $C$, depending only on $n$, $m$, $\lambda$, $\Lambda$, and $\omega$, such that the following holds: for each $Z = (z,s)\in Q_{1/2}$ and $r\in(0,\frac{1}{4})$, there exists some time-independent vectorial linear function $\ell_{Z,r}$ for which $|\nabla \ell_{Z,r}| \leq C |\log r|$ and 
\begin{equation}\label{eq:bmo-para}
\sup_{t\in(-r^2 + s,s)} \int_{B_r(z)} |u(x,t) - u(z,s) - \ell_{Z,r}(x) |^2\,dx  \leq Cr^{n+2}. 
\end{equation} 
\end{lemma}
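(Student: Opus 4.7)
My plan is to mimic Lemma~\ref{lemma:bmo}, now working in parabolic terms and accommodating the logarithmic loss that is characteristic of parabolic systems with Dini continuous coefficients and divergence-form source merely in $L^\infty$. First I would rewrite \eqref{eq:trans-para} as
$$
\partial_t u - \ddiv(A\nabla u) = \ddiv F, \qquad F := (A-B)\chi_D \nabla u,
$$
so that $\norm{F}_{L^\infty(Q_1)}\le C$ by \eqref{eq:a-bd} and \eqref{eq:u-lip-para}. The statement \eqref{eq:bmo-para} is then a parabolic log-Lipschitz estimate at $Z$: a \emph{spatial} (time-independent) affine function approximates $u(\cdot,t)-u(Z)$ to order $r$ in $L^2(B_r(z))$, uniformly in $t\in(-r^2+s,s)$, with linear part of size at most $|\log r|$.

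To construct $\ell_{Z,r}$, I would run a dyadic freezing iteration along scales $r_k=2^{-k}$, analogous to Section~\ref{section:sys-lin}. After translating $Z$ to the origin (so $s=0$, $z=0$) and subtracting the constant $u(0)$, the induction hypothesis is the existence of time-independent linear functions $\ell_k$ (with $\ell_0\equiv 0$) satisfying
$$
\sup_{t\in(-r_k^2,0)}\int_{B_{r_k}} |u(x,t)-\ell_k(x)|^2\,dx \le C_\star\, r_k^{n+2}, \qquad |\nabla \ell_{k+1}-\nabla \ell_k|\le C_\star,
$$
for a universal $C_\star$. The inductive step is standard: on $Q_{r_k}$, compare $u-\ell_k$ with the weak solution $v_k$ of the frozen-coefficient homogeneous system
$$
\partial_t v_k = \ddiv(A(0)\nabla v_k) \quad \text{in } Q_{r_k}, \qquad v_k = u-\ell_k \text{ on } \partial_p Q_{r_k}.
$$
Interior constant-coefficient parabolic regularity provides a $C^{1,\alpha}$ estimate for $v_k$, hence a linear approximation of $v_k$ at the smaller scale $r_{k+1}$ improving the $L^2$ deviation by the factor $2^{-(n+2)(1+\alpha)}$ up to a geometric safety constant. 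The perturbation $u-\ell_k-v_k$ satisfies a divergence-form parabolic system whose source assembles $F$ together with the freezing error $(A(0)-A)\nabla\ell_k$ of size $\omega(r_k)|\nabla\ell_k|$. Parabolic energy estimates in $L^\infty_t L^2_x\cap L^2_t W^{1,2}_x$ then close the induction; exactly the $L^\infty_t L^2_x$ part of the estimate is what allows the conclusion \eqref{eq:bmo-para} to hold uniformly in $t$ rather than only at the final time.

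The main obstacle, and the reason the estimate is only log-Lipschitz (not Lipschitz), is that $\norm{F}_{L^\infty}$ does not decay with $r_k$: at each dyadic scale, the gradient increment $|\nabla\ell_{k+1}-\nabla\ell_k|$ is merely bounded rather than geometrically summable, so telescoping forces
$$
|\nabla \ell_k|\le C_\star\, k\le C|\log r_k|.
$$
A secondary subtlety is to keep the cumulative freezing error $\omega(r_k)|\nabla\ell_k|\lesssim k\,\omega(2^{-k})$ under control throughout the induction; this is ensured by the Dini condition, which forces $\omega(2^{-k})$ to decay strictly faster than $1/k$ along the dyadic sequence used here. Finally, the passage from dyadic $r_k$ to arbitrary $r\in(0,\tfrac14)$ follows by choosing $k$ with $r_{k+1}\le r<r_k$, picking $\ell_{Z,r}:=\ell_k$, and using the uniform increment bound $|\nabla\ell_{k+1}-\nabla\ell_k|\le C_\star$ together with a trivial rescaling of the supremum-in-time integral.
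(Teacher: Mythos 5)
Your proposal follows essentially the same approach as the paper's proof, which delegates the dyadic freezing iteration to Appendix Lemma~\ref{lemma:bmo-para-gen} and then deduces continuity of $u$ so that $\esssup_t$ can be replaced by $\sup_t$. The only point worth flagging is that you subtract ``the constant $u(0)$'' at the start, whereas $u$ is a priori only an $L^\infty_t L^2_x$ weak solution and has no well-defined pointwise value; the paper instead lets the iteration produce a constant vector $a_Z$ as the limit of $\ell_k(0)$ (summable because $|\ell_k(0)-\ell_{k-1}(0)|\leq c\mu^k$), and only afterwards identifies $a_Z=u(Z)$ for a.e.\ $Z$ and upgrades $u$ to a continuous representative --- which is precisely what justifies writing $\sup$ rather than $\esssup$ in \eqref{eq:bmo-para}.
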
 

\begin{proof}
Note that $u$ is  a weak solution of 
\begin{equation*}
\partial_t u - \ddiv (A\nabla u) = \ddiv F\quad\text{in }Q_1, 
\end{equation*}
where $F = (A-B)\chi_D \nabla u$. Due to \eqref{eq:a-bd} and \eqref{eq:u-lip-para},  $\norm{F}_{L^\infty(Q_1)} \leq C$ for some $C>0$, depending only on $n$, $m$, and $\lambda$. Hence, we can apply Lemma \ref{lemma:bmo-para-gen} for each $Z\in Q_{1/2}$. This yields a constant vector $a_Z\in\R^m$, with $|a_Z|\leq C$, and  a time-independent vectorial linear function $\ell_{Z,r}$, for each $r\in(0,\frac{1}{4})$, such that $|\nabla \ell_{Z,r}|\leq C|\log r|$ and 
$$
\esssup_{t\in(-r^2 + s,s)} \int_{B_r(z)} |u(x,t) - a_Z - \ell_{Z,r}(x) |^2\,dx  \leq Cr^{n+2}.
$$
In particular, using the bound $|\nabla \ell_{Z,r}|\leq C |\log r|$ we easily deduce that that $a_Z = u(Z)$ for a.e. $Z\in Q_{1/2}$, and that $|a_Z - a_Y| \leq Cd_p(Z,Y) |\log d_p(Z,Y)|$ for any $Z,Y\in Q_{1/2}$. Thus, after redefining $u$ in a set of null measure if necessary, we can conclude that $u$ is continuous in $Q_{1/2}$, and $a_Z = u(Z)$ for all $Z\in Q_{1/2}$. Due to the continuity, we can also replace $\esssup$ (in $t$) with $\sup$. This finishes the proof. 
\end{proof} 

Define, for each $r\in(0,\frac{1}{4})$, 
\begin{equation}\label{eq:Dr}
D_{Z,r} = \{ (x,t) \in Q_1 : (rx,r^2 t) + Z\in Q_r(Z)\cap D\},
\end{equation}  
and $D_r = D_{0,r}$. We shall prove a geometric decay of the Lebesgue measure of $D_{Z,r}$, provided that $|\nabla \ell_{Z,r}|$ is sufficiently large.  

\begin{lemma}\label{lemma:decay-para}
Assume that $\omega(r)|\log r|\leq\frac{1}{2}$ for all $r\in(0,\frac{3}{4}]$. Let $Z\in Q_{1/2}$ and $r\in(0,\frac{1}{4})$ be given, and let $\ell_{Z,r}$ be as in Lemma~\ref{lemma:bmo-para} with $r\in(0,\frac{1}{4})$. There are some constants $C>0$ and $M>1$, depending only on $n$, $m$, $\lambda$, $\Lambda$, and $\omega$, such that if $|\nabla \ell_{Z,r}|\geq M$, then 
\begin{equation}\label{eq:decay-para}
 |D_{Z,r/2}| \leq \frac{|D_{Z,r}|}{2^{n+2}} + C\omega(r)^{3n+4}. 
\end{equation}
\end{lemma}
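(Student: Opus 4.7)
The plan is to adapt Lemma~\ref{lemma:den} to the parabolic setting by parabolic rescaling and invoking parabolic (rather than elliptic) $L^p$-theory; the exponent $n+2$ in $2^{n+2}$ reflects the parabolic volume scaling $|Q_r|\sim r^{n+2}$, and the exponent $3n+4$ in $\omega(r)^{3n+4}$ comes from choosing the $L^p$-exponent $p=3n+4$ at the end.

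\textbf{Setup.} After translating we may assume $Z=0$, so that $\ell_r:=\ell_{0,r}$ is time-independent with $|\nabla\ell_r|\le C|\log r|$. Introduce the parabolic rescaling
\[
u_r(x,t):=\frac{u(rx,r^2t)}{r},\qquad A_r(X):=A(rx,r^2t),\qquad B_r(X):=B(rx,r^2t),
\]
so that $u_r$ weakly solves
\[
\partial_t u_r=\ddiv\bigl((A_r+(B_r-A_r)\chi_{D_r})\nabla u_r\bigr)\quad\text{in }Q_1,
\]
with $\norm{\nabla u_r}_{L^\infty(D_r)}\le 1$ and, by Lemma~\ref{lemma:bmo-para}, $\sup_{t\in(-1,0)}\int_{B_1}|u_r-\ell_r|^2\,dx\le C$.

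\textbf{Frozen problem and decomposition.} Let $v_r$ solve the constant-coefficient parabolic system $\partial_t v_r-\ddiv(A_r(0)\nabla v_r)=0$ in $Q_1$ with parabolic boundary data $v_r-(u_r-\ell_r)=0$ on $\partial_p Q_1$. Interior Lipschitz regularity for constant-coefficient parabolic systems, combined with the Caccioppoli/energy estimate tested against $v_r-(u_r-\ell_r)$ and the $L^2$-bound from Lemma~\ref{lemma:bmo-para}, gives
\[
\norm{\nabla v_r}_{L^\infty(Q_{2/3})}+\norm{\nabla v_r}_{L^2(Q_1)}\le C.
\]
Then $w_r:=u_r-\ell_r-v_r$ vanishes on $\partial_p Q_1$ and satisfies
\[
\partial_t w_r-\ddiv(A_r\nabla w_r)=\ddiv(F_r+\phi_r)\quad\text{in }Q_1,
\]
where $F_r:=(A_r-B_r)\chi_{D_r}\nabla u_r$ satisfies $|F_r|\le C\chi_{D_r}$, and $\phi_r:=(A_r(0)-A_r)(\nabla\ell_r+\nabla v_r)$ satisfies $|\phi_r|\le C\omega(r)(|\nabla\ell_r|+1)$ on $Q_{2/3}$ (the assumption $\omega(r)|\log r|\le\tfrac12$ is what keeps $\phi_r$ bounded).

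\textbf{Energy and $L^p$-estimates.} The parabolic energy estimate for $w_r$ (using $w_r=0$ on $\partial_pQ_1$, in particular $w_r(\cdot,-1)=0$) gives
\[
\int_{Q_1}|\nabla w_r|^2\,dX\le C\bigl(|D_r|+\omega(r)^2(|\nabla\ell_r|+1)^2\bigr).
\]
Since $A_r$ is Dini continuous (hence VMO), the parabolic Calder\'on–Zygmund theory for systems (the parabolic counterpart of \cite{GM12}) yields, for every $p>2$,
\[
\int_{Q_{1/2}}|\nabla w_r|^p\,dX\le C_p\Bigl(|D_r|+\omega(r)^p(|\nabla\ell_r|+1)^p\Bigr).
\]

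\textbf{Closing the inequality.} On $D_r$ we have $\nabla u_r=\nabla\ell_r+\nabla v_r+\nabla w_r$ with $|\nabla u_r|\le 1$, so on $D_r\cap Q'$ with $Q':=B_{1/2}\times(-\tfrac14,0)$,
\[
|\nabla\ell_r|^p\le 3^{p-1}\bigl(|\nabla u_r|^p+|\nabla v_r|^p+|\nabla w_r|^p\bigr).
\]
Integrating over $D_r\cap Q'$ and invoking the bounds on $\nabla v_r$ and $\nabla w_r$ yields
\[
|\nabla\ell_r|^p\,|D_r\cap Q'|\le C_p\bigl(|D_r|+\omega(r)^p(|\nabla\ell_r|+1)^p\bigr).
\]
Take $p=3n+4$ and $M:=(2^{3(n+2)}C_p)^{1/p}$. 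The assumption $|\nabla\ell_r|\ge M$ gives $(|\nabla\ell_r|+1)^p\le 2^p|\nabla\ell_r|^p$, and dividing by $|\nabla\ell_r|^p$ produces
\[
|D_r\cap Q'|\le 2^{-2(n+2)}|D_r|+C\omega(r)^{3n+4}.
\]
Finally, the parabolic scaling identity $|D_{r/2}|=2^{n+2}|D_r\cap Q'|$ (checked directly from \eqref{eq:Dr}) converts this into the desired bound.

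\textbf{Main obstacle.} The delicate step is the parabolic $L^p$-estimate: one needs a Calder\'on–Zygmund result for parabolic systems with Dini-continuous leading coefficients that tracks the dependence on the right-hand side (both the $L^\infty$ perturbation $\phi_r$ and the $\chi_{D_r}$-supported term $F_r$) quantitatively. This is standard but requires careful invocation. The choice $p=3n+4$ is dictated not by this lemma in isolation but by the Dini-type integrals that must converge when iterating the decay of $|D_{Z,2^{-k}}|$ dyadically in the proof of Theorem~\ref{theorem:parabolic}, much as $p=3n$ was chosen in the elliptic case.
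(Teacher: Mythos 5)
Your proof follows the same overall decomposition as the paper's: rescale parabolically, freeze the coefficient $A$ at the center, split $u_r-\ell_r$ as $v_r+w_r$, use Caccioppoli/energy for the $L^2$-bound, the $L^p$-theory to improve integrability of $\nabla w_r$, and then close with the density-decay computation exactly as in Lemma~\ref{lemma:den}. The constant choices and the scaling identity $|D_{r/2}|=2^{n+2}|D_r\cap Q'|$ are correct.

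There is, however, one genuine gap, and it is precisely where the parabolic case diverges from the elliptic one. You pose the frozen problem on all of $Q_1$ with boundary data $u_r-\ell_r$ on $\partial_p Q_1$, and then claim $\|\nabla v_r\|_{L^2(Q_1)}\le C$ by ``Caccioppoli/energy estimate tested against $v_r-(u_r-\ell_r)$ and the $L^2$-bound from Lemma~\ref{lemma:bmo-para}.'' In the elliptic proof, this step goes through because Lemma~\ref{lemma:bmo} is a $BMO$-type estimate \emph{on the gradient}: it gives $\int_{B_1}|\nabla u_r-\nabla\ell_r|^2\,dx\le C$ directly, which is exactly what the energy estimate for $v_r$ requires. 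In the parabolic case, Lemma~\ref{lemma:bmo-para} only controls $u-a-\ell_{Z,r}$ in $L^\infty_t L^2_x$, \emph{not} $\nabla u-\nabla\ell_{Z,r}$. Therefore, after rescaling you only know $\sup_t\int_{B_1}|u_r-\ell_r|^2\,dx\le C$, and the energy estimate for $v_r$ on $Q_1$ would need a uniform bound on $\|\nabla(u_r-\ell_r)\|_{L^2(Q_1)}$, which you do not have. The only way to convert the $L^\infty_t L^2_x$ bound into a spatial gradient bound is Caccioppoli's inequality for the equation satisfied by $\hat w_r=u_r-\ell_r$, which is an \emph{interior} estimate: it gives $\int_{Q_{3/4}}|\nabla\hat w_r|^2\,dX\le C$, but nothing on $\partial_pQ_1$. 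So the frozen problem must be posed on a strictly smaller cylinder (the paper uses $Q_{3/4}$) whose parabolic boundary lies in the region where the Caccioppoli bound is available, and the order of operations must be: Caccioppoli for $\hat w_r$ first, then solve $v_r$ on the smaller cylinder. The remaining steps (energy estimate for $w_r=\hat w_r-v_r$, interior $L^p$-estimate on a yet smaller cylinder, and the final density computation with $p=3n+4$) then go through as you sketched, with the nested cylinders $Q_{3/4}\supset Q_{2/3}\supset Q_{1/2}$.
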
 

\begin{remark}\label{remark:decay-para}
Note that the assumption $\omega(r)|\log r|\leq\frac{1}{2}$ for all $r\in(0,\frac{3}{4}]$ can always be satisfied with a Dini modulus of continuity $\omega$, after some scaling; see Lemma \ref{lemma:dini}. 
\end{remark}

\begin{proof}
Throughout this proof, we denote by $C$ a generic constant depending on $n$, $m$, $\lambda$, $\Lambda$, and $\omega$, only. For the matter of simplicity, we shall take $Z = ( 0,0)$. The general case will follow the same lines of argument. 

Subtracting a constant vector if necessary, we  assume that $u(0,0) = 0$, and write  
\begin{equation*}
u_r(x,t) = \frac{u(rx,r^2t)}{r}. 
\end{equation*} 
Let $\ell_r = \ell_{0,r}$ be as in Lemma~\ref{lemma:bmo-para}. Note that $\hat w_r = u_r - \ell_r$ is a weak solution of 
\begin{equation}
\label{eq:w}
\partial_t \hat w_r = \ddiv (A_r \nabla \hat w_r + F_r + \hat \phi_r)\quad\text{in }Q_1,
\end{equation}
where 
\begin{equation*}
\begin{aligned}
A_r(x,t) &= A(rx,r^2t),\quad \hat \phi_r(x,t) = (A_r(x,t) - A_r(0,0)) \nabla \ell_r, \\
B_r(x,t) &= B(rx,r^2t),\quad\text{and}\quad F_r(x,t) = (B_r(x,t) - A_r(x,t))\chi_{D_r} \nabla u_r.
\end{aligned}
\end{equation*}
Also by \eqref{eq:bmo-para}, we have 
\begin{equation}\label{eq:whr-L2-para}
\sup_{t\in(-1,0)} \int_{B_1} |\hat w_r(x,t)|^2\,dx  \leq C.
\end{equation}  Recall from Lemma \ref{lemma:bmo-para} that $|\nabla \ell_r| \leq C|\log r|$. Thus, by \eqref{eq:a-dini} and the assumption $\omega(\rho)|\log \rho|\leq\frac{1}{2}$ for all $\rho\in(0,\frac{3}{4}]$, we have 
\begin{equation}\label{eq:phihr-L2-para}
\int_{Q_{3/4}} |\hat \phi_r|^2 \,dX \leq C(\omega(r) |\log r|)^2 \leq C.
\end{equation} 
On the other hand, thanks to \eqref{eq:a-ellip}, \eqref{eq:a-bd}, and \eqref{eq:u-lip-para}, for any $p\geq 1$ it holds 
\begin{equation}\label{eq:Fr-Lp-para}
\int_{Q_{3/4}} |F_r|^p\,dX \leq C_p |D_r|.
\end{equation}
Therefore, it follows from the Caccioppoli inequality for \eqref{eq:w} that 
\begin{equation}\label{eq:Dur-L2-para}
\int_{Q_{3/4}} |\nabla \hat w_r|^2\,dX \leq C.  
\end{equation} 
Consider now the weak solution $v_r$ to
\begin{equation*}
\begin{cases}
\partial_t v_r = \ddiv (A(0,0) \nabla v_r)& \text{in }Q_{3/4}, \\
v_r = \hat w_r (= u_r - \ell_r) &\text{on }\partial_p Q_{3/4}.
\end{cases}
\end{equation*} 
Combining \eqref{eq:Dur-L2-para} and the interior gradient estimate for constant, linear parabolic systems, we deduce that $\nabla v_r \in L^\infty(Q_{2/3})$ and 
\begin{equation}\label{eq:vr-lip-para}
\norm{\nabla v_r}_{L^\infty(Q_{2/3})} \leq C.
\end{equation}
Observe that the auxiliary function
$$
w_r = u_r - \ell_r - v_r = \hat w_r - v_r 
$$
is a weak solution of 
\begin{equation}\label{eq:wr-pde-para}
\begin{cases}
\partial_t w_r = \ddiv (A_r\nabla w_r + F_r + \phi_r) & \text{in }Q_{3/4},\\
w_r = 0 & \text{on }\partial_p Q_{3/4},
\end{cases}
\end{equation} 
where $A_r$ and $F_r$ are as above, while $\phi_r = (A_r - A_r(0,0))(\nabla \ell_r + \nabla v_r)$. 
By \eqref{eq:Dur-L2-para}, \eqref{eq:Fr-Lp-para}, and \eqref{eq:wr-pde-para}, we obtain 
\begin{equation}\label{eq:Dwr-L2-para}
\begin{split}
\esssup_{t\in(-\frac{9}{16},0)} \int_{B_{3/4}} |w_r(x,t)|^2\,dx &+ \int_{Q_{3/4}} |\nabla w_r|^2\,dX \\
& \leq C \int_{Q_{3/4}} (|F_r|^2 +|\phi_r|^2) \,dX\\
& \leq C \big(|D_r| + \omega(r)^2 (|\nabla \ell_r| + 1)^2\big).
\end{split}
\end{equation}
On the other hand, it also follows from \eqref{eq:vr-lip-para} that
\begin{equation}\label{eq:phir-Linf-para}
\norm{\phi_r}_{L^\infty(Q_{2/3})} \leq \omega(r) ( |\nabla \ell_r| + 1 ).
\end{equation} 
Applying the interior $L^p$-theory \cite[Theorem 4.IV]{Cam} to the parabolic system \eqref{eq:wr-pde-para}, and using  \eqref{eq:Fr-Lp-para}, \eqref{eq:Dwr-L2-para}, and \eqref{eq:phir-Linf-para}, we arrive at 
\begin{equation}\label{eq:Dwr-Lp-para}
\begin{split}
\int_{Q_{1/2}} |\nabla w_r|^p\,dX 
&\leq C_p  \left(\int_{Q_{2/3}} (|w_r|^2 + |\nabla w_r|^2)\,dX\right)^{p/2} \\
&\qquad\qquad + C_p \int_{Q_{2/3}} |F_r + \phi_r|^p\,dX   \\
&\leq  C_p \big( |D_r| + \omega(r)^p (|\nabla \ell_r| + 1)^p \big).
\end{split} 
\end{equation}
The rest of the proof can be finished by following the lines of the proof of Lemma~\ref{lemma:den}; we use \eqref{eq:vr-lip-para}, \eqref{eq:Dwr-Lp-para}, and \eqref{eq:u-lip-para} in replacement of \eqref{eq:Dvr-Linf}, \eqref{eq:Dwr-Lp}, and \eqref{eq:Du-Linf-D}, respectively. We omit the details. 
\end{proof} 

Now we are in  position to prove Theorem~\ref{theorem:parabolic}.

\begin{proof}[Proof of Theorem~\ref{theorem:parabolic}]
We can assume that $u$ is normalised, so to satisfy \eqref{eq:u-lip-para}. We shall first prove the Lipschitz regularity of $u$ in space, and then verify the $\frac{1}{2}$-H\"older continuity of $u$ in time. 

As in the proof of Theorem~\ref{theorem:elliptic}, to prove \eqref{eq:lip-para} it suffices to prove that $\nabla u(Z) \leq C_0M$ for all Lebesgue point $Z\in Q_{1/2}\setminus\overline D$ of $\nabla u \in L^2(-1,0;L^2(B_1))$, where $C_0>1$ is a constant depending only on $n$, $m$, $\lambda$, $\Lambda$, and $\omega$. Again, we present the proof with $Z = (0,0)$ for notational convenience. 

Let $\ell_r = \ell_{0,r}$ be as in Lemma~\ref{lemma:bmo-para}. Choosing $M>1$ as in Lemma~\ref{lemma:decay-para}, we are left with the following dichotomy:
\begin{enumerate}[({Case} 1)]
\item $\liminf\limits_{k\ra\infty}  \left| \nabla \ell_{2^{-k}} \right| < 2M$,
\item $\liminf\limits_{k\ra\infty}  \left| \nabla \ell_{2^{-k}} \right| \geq 2M$.
\end{enumerate}
We can handle each case separately, as in the proof of Theorem~\ref{theorem:elliptic}. The argument can be repeated here almost verbatim; as for the proof for the parabolic counterpart to \eqref{eq:q}, we use Lemma \ref{lemma:C1-para} instead of \cite[Proposition 2.1]{Li17}. This proves the Lipschitz regularity of $u$ in space, with estimate 
\begin{equation}\label{eq:lip-space}
\norm{\nabla u}_{L^\infty(Q_{1/2})} \leq C_0M \leq C, 
\end{equation} 
where $C$ is a constant depending only on $n$, $m$, $\lambda$, $\Lambda$, and $\omega$. 

To show the $\frac{1}{2}$-Lipschitz regularity of $u$ in time, let $Z=(z,s) \in Q_{1/4}$, $r\in(0,\frac{1}{4})$ be arbitrary, and choose $\ell_{Z,r}$ as a time-independent vectorial linear function satisfying Lemma~\ref{lemma:bmo-para}. In what follows, we shall write by $C$ a large constant that may differ at each occasion, yet depends only on $n$, $m$, $\lambda$, $\Lambda$, and $\omega$.

Following the derivation of \eqref{eq:Dur-L2-para}, we obtain that
$$
\esssup_{t\in(-r^2 + s,s)}\int_{B_r(z)} |\nabla u(x,t) -\nabla \ell_{Z,r} |^2\,dx \leq C r^n, 
$$
and thus, along with \eqref{eq:lip-space}, we have 
$$
|\nabla \ell_{Z,r}|^2 \leq C + \frac{C}{r^n}\esssup_{t\in(-r^2 + s,s)} \int_{B_r(z)} |\nabla u(x,t)|^2\,dx \leq C. 
$$
Utilising this inequality in \eqref{eq:bmo-para}, we obtain that
$$
\begin{aligned} 
\sup_{t\in(-r^2 + s,s)}\int_{B_r(z)} | u(x,t) - u(z,s) |^2 \,dx  &\leq Cr^{n+2} + C\int_{B_r(z)} |\ell_{Z,r}(x)|^2\,dx \\
& \leq Cr^{n+2}. 
\end{aligned}
$$
Since the choice of $Z\in Q_{1/4}$ and $r\in(0,\frac{1}{4})$ was arbitrary, we conclude that for any $X,Y \in Q_{1/4}$, 
$$
|u(X) - u(Y)| \leq C d_p (X,Y),
$$
proving the $\frac{1}{2}$-Lipschitz regularity in time as well. 
\end{proof}


\section{Discussions and future directions}\label{discuss}

\subsection{Optimal regularity of solutions} \label{discuss-1}
In this section, we shall discuss the scalar case, although the whole discussion carries over to  the vectorial case. 

The question of  transition of regularity from one phase to another phase for solutions to  (elliptic/parabolic) equations has a central role in the analysis of free boundary problems. Although such questions arise in many applications, the  mere mathematical  point of view is of wide interest among people in PDE/FBP.
They are central in studying a larger class of equations that do not have variational or constrained formulation, as pointed out by two of the current authors in \cite{FS14}.

A question that  appears in  potential theory (and mostly known in scalar case) is the so-called harmonic continuation property. 
To explain this, let $D$ be a given domain in $\R^n$, and let $\sigma_{\partial D}$ denote the surface measure. Consider the 
 single layer potentials\footnote{We assume $\partial D$ has some a priori regularity such that the single layer potential is well defined.}
  $U^{\partial D} (x) = F \star d\sigma_{\partial D}$, where  ``$\star$'' denotes convolution, and $F$ is the (normalised)  fundamental solution of the Laplace operator, so that 
$\Delta U^{\partial D} = - d\sigma_{\partial D}$ in the sense of distributions.
We say $\partial D$ has the harmonic continuation property  near $z \in \partial D$ if there exist $r>0$ and
 a harmonic function $h$ in $B_r(z)$ such    that $U^{\partial D}=h$ in $D \cap B_r (z)$. 
   
   For analytic boundaries, this property holds true due to Cauchy-Kowalevskaya theorem. This is a consequence of  the fact that  one can solve 
   $\Delta v = 0$ in $D \cap B_r (z)$ with Cauchy-data $v=0$, and $|\nabla  v | = 1$ on $\partial D \cap B_r(z)$.    Since  $\Delta U^{\partial D} = \Delta v =  - d\sigma_{\partial D}$,    the function  $h:=  U^{\partial D} - v\chi_D $ is harmonic in $B_r(z)$ and satisfies $h= U^{\partial D} $ in $D^c \cap B_r (z)$; thus, $\partial D$  has the harmonic continuation property near $z \in \partial D$.

Suppose now  $\partial D$ has  the harmonic continuation property close to a boundary point $z\in \partial D$, where  $D$ is given with no a priori regularity assumption for its boundary. 
The question that arise is: ``How regular is the boundary $\partial D \cap B_r (z)$?''
 To study this question, one may (and probably should) start with a simpler question, namely, finding the optimal regularity of $v = U^{\partial D} - h$ in $B_{r/2}(z)$, where $h$ is the harmonic function in $B_r(z)$ mentioned above.
 This amounts to finding  the optimal regularity of 
 $U^{\partial D}$ in $B_{r/2} (z) \setminus D $, given that $\partial D$ has harmonic continuation property.
 
  In \cite{ACS} the authors consider this problem in scalar case for  Lipschitz domains by setting
   $v= U^{\partial D} -  h $, so that it  satisfies $\Delta v= - d\sigma_{\partial D}$ and $v=0$ in $D$. They prove, using a suitable monotonicity formula,  that $v$ is uniformly Lipschitz  in $ B_{r/2} (z) \setminus D$. 

The above regularity question for the single layer potential  is directly connected to the transmission problem studied in this paper. Indeed, for Lipschitz domains one can express solutions to the transmission problem through integral operators, using  layer potentials; see \cite{EFV} (scalar case) or \cite{Am-Ka} (vectorial case). However, it is unknown to us how the Lipschitz regularity assumption on $\partial D$ can be weakened to allow this reformulation. This remains an interesting question  to answer.

Our result in this paper indicates  that, if we can  rephrase the question in terms of the transmission problem \eqref{eq:trans}, then  the single layer potential  $U^{\partial D}$, with $D$ having harmonic continuation property, should be uniformly Lipschitz in $B_{r/2} (z)$.
As pointed out in the introduction, the method of \cite{ACS} works well  if we replace the harmonic continuation property with  $C^{2}$ continuation of the single layer potential, in Lipschitz domains.\footnote{The proof of \cite{ACS} uses the well-known ACF-monotonicity formula, in the harmonic continuation case. For $C^2$-continuation case one can use Caffarelli-Jerison-Kenig monotonicity formula, \cite{CJK}.} 
A natural question is how far one can stretch this relaxation of regularity. 
Our result indicates  that if  $U^{\partial D}$ is  uniformly Lipschitz in $D$, then  this Lipschitz regularity can be transmitted across the boundary. This naturally is true  across regular  boundary points, and preserves the  uniform Lipschitz-norm up to a multiplicative constant, in a neighborhood the boundary. This neighborhood, however,  may possibly become smaller as we come closer to a non-smooth boundary point.
The tantalising question that arises is what {\it a priori} conditions (if any) one should impose on  $\partial D$  to guarantee 
  the transmission of Lipschitz regularity across the boundary for the single layer potential.



 

We shall now formulate two  questions that might be of interest to  readers.



\bigskip
\noindent
{\bf Question 1:} Can one generalise our results to the setting of singular/degenerate operators, such as the $p$-Laplacian? 

\bigskip
\noindent
{\bf Question 2:} Consider nonlinear transmission systems, 
$$
\ddiv (A(\nabla u)\chi_{D^c} + B(\nabla u)\chi_D) = 0,
$$
where both $A$ and $B$ are strongly elliptic, nonlinear operators. It is well-known that nonlinear systems do not have Lipschitz solutions, in general, even if $A = B$ (so the system is homogeneous) and the dependence on $\nabla u$ is smooth. This remains true even for  minimisers  of  a nonlinear functional, see \cite{SY}. It is also known that the boundary regularity fails for nonlinear systems, even if the boundary data is smooth, see e.g., \cite{Gia78}. 
However, if we assume that $u$ is Lipschitz up to $\partial D$, then the Lipschitz regularity may have some chances of propagating to the other side, in some small neighborhood, depending on the geometry of $\partial D$. This is because the governing system yields a matching condition of the normal derivatives of $u$ on $\partial D$: formally, 
$$
A_i^\alpha (\nabla u|_{D^c})\nu_\alpha + B_i^\alpha (\nabla u|_D)\nu_\alpha = 0,
$$
whenever the outward normal $\nu$ is defined on $\partial D$. This may leave us in a better situation than a Dirichlet boundary problem, since for the latter problem the normal derivatives of the solution does not need to match those of the boundary data.

For instance, let $\partial D$ be a hyperplane. Then from the assumptions that $u$ is Lipschitz up to $\partial D$ from $D$, and that the equation yields a matching condition of the normal derivative of $u$ on $\partial D$, it is reasonable in Question 2 to expect the propagation of the Lipschitz regularity to the other side. 

On the other hand, if $\partial D$ has a cusp so that $D$ does not have positive density at a point on $\partial D$, then the nice information from $D$ may lose its effect, and the nonlinearity of the operators in Question 2 does not supplement the loss of information. More precisely, in the blowup regime the limit solution of $u$ will solve $\ddiv A(\nabla u_0) = 0$ everywhere (recall that $A$ is the governing operator in the region $D^c$). Unlike the case of linear systems, the blowup limit $u_0$ may fail to be Lipschitz, so this strategy cannot give any regularity improvement for the original solution $u$. 

This discussion shows that there is still much to explore for the case of nonlinear systems regarding the propagation of the Lipschitz regularity, and we leave this problem open for the future.


%
%

\subsection{Regularity of the free boundary} \label{discuss-2}

In this section we want to discuss the challenging question of regularity of $\partial D$.
For scalar case, the authors in \cite{ACS} study the regularity of  those part of $\partial D$ where the solution does not degenerate; i.e., behaves ``linearly''. They prove that,  under  a priori Lipschitz regularity assumption or a flatness and $\epsilon$-monotonicity of the solution (in a cone of directions), the free boundary is  $C^{1,\alpha}$. 

Still in the scalar case, when $D$ is given by a level set, the authors in \cite{John-Hayk} prove that flat points are almost everywhere with respect to the measure $\Delta u^+$ (in their setting, this is a positive measure whose support is of $\sigma$-finite ($n-1$)-dimensional Hausdorff measure). 

The methods in both \cite{ACS} and \cite{John-Hayk} can be carried out in our setting for the scalar case, under suitable assumptions on the interface $\partial D$. For instance, if $u = \ell$ in $D$ for some affine function $ \ell$, then $u -  \ell$ is essentially the same as in these papers, provided that $u -  \ell$ is non-degenerate across $\partial D$. One may also be able to generalise this by replacing $ \ell$ with some $f\in C^{1,\alpha}(B_1)$. However, the methods in both \cite{ACS} and \cite{John-Hayk} cannot be extended to the systems, since all the techniques are based on maximum/comparison principles. 

In the case of systems, the regularity theory for free boundary problems is wide open, despite its importance. Some essential techniques, such as comparison principles and monotonicity formulas, which are well established for scalar problems, tend to fail for systems in general. Therefore, one has to come up with a new technique to analyze vectorial free boundary problems. In this direction, it will also be interesting to see if one can recover the regularity theory for scalar free boundary problems with energy methods only, and then carry it over to systems. We shall not discuss this issue in more depth, as it goes beyond the scope of this paper.

\appendix

\section{Technical tools}

Let us begin with some lemmas for Dini moduli of continuity. Recall that $\omega$ is said to be a Dini modulus continuity, if $\omega:(0,1]\to (0,\infty)$ is a non-decreasing function satisfying $\lim\limits_{r\to 0} \omega(r) = 0$, and $\int_0^1 \frac{\omega(r)}{r}\,dr < \infty$. 

\begin{lemma}\label{lemma:dini}
If $\omega$ is a Dini modulus of continuity, then $\lim\limits_{r\to 0} \omega(r)\log\frac{1}{r} = 0$. 
\end{lemma}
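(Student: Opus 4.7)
The plan is to exploit the monotonicity of $\omega$ to compare $\omega(r)\log\frac{1}{r}$ with a tail integral of $\omega(s)/s$, which must vanish by the Dini condition.

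More precisely, for $r \in (0,1)$ sufficiently small (say $r < 1$ so $\sqrt{r} < 1$), I would estimate the integral of $\omega(s)/s$ over the interval $[r,\sqrt{r}]$ from below using monotonicity:
\begin{equation*}
\int_r^{\sqrt{r}} \frac{\omega(s)}{s}\,ds \;\geq\; \omega(r) \int_r^{\sqrt{r}} \frac{ds}{s} \;=\; \omega(r)\,\bigl(\log\sqrt{r} - \log r\bigr) \;=\; \tfrac{1}{2}\,\omega(r)\,\log\tfrac{1}{r}.
\end{equation*}
Rearranging gives $\omega(r)\log\frac{1}{r} \leq 2\int_r^{\sqrt r} \omega(s)/s\,ds$.

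Then I would conclude by observing that, since $\omega$ is a Dini modulus of continuity, the integral $\int_0^1 \omega(s)/s\,ds$ is finite, hence its tail $\int_0^{\sqrt r} \omega(s)/s\,ds$ tends to $0$ as $r\to 0^+$ (because $\sqrt{r}\to 0$). In particular $\int_r^{\sqrt r}\omega(s)/s\,ds \to 0$, which together with the inequality above forces $\omega(r)\log\frac{1}{r}\to 0$.

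There is no real obstacle here; the only point to be careful about is choosing the upper endpoint of the comparison interval (I used $\sqrt{r}$, but any function $\phi(r)$ with $\phi(r)\to 0$ and $\log(\phi(r)/r)\to\infty$ would give the same conclusion with the appropriate constant). The choice $\sqrt r$ is convenient because $\log\sqrt r - \log r = \tfrac12 \log\tfrac1r$ cleanly extracts the logarithmic factor we want to control.
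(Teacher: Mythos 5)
Your proof is correct and uses the same underlying idea as the paper: lower-bound a tail integral $\int \omega(s)/s\,ds$ by $\omega(r)$ times a logarithmic factor via monotonicity of $\omega$, then invoke the Dini condition to make the integral vanish. Your variant with the moving upper endpoint $\sqrt{r}$ is slightly cleaner than the paper's, which fixes an endpoint $r_1$ and must then absorb the residual term $\omega(r)\log\frac{1}{r_1}$ in a second step by choosing a further $r_2$ using $\omega(r)\to 0$.
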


\begin{proof}
Let $\delta>0$ be arbitrary. Then from the Dini condition, there exists some $r_1\in(0,\frac{1}{2})$ such that for any $r\in(0,r_1)$, 
$$
\delta > \int_r^{r_1} \frac{\omega(s)}{s}\,ds \geq \omega(r) \int_r^{r_1} \frac{ds}{s} = \omega(r) \log \frac{1}{r} - \omega(r) \log \frac{1}{r_1} >0. 
$$
Now we choose a sufficiently small $r_2\in(0,r_1)$ such that $\omega(r) \log \frac{1}{r_1} < \delta$ for all $r\in(0,r_2]$. Then 
$$
0< \omega(r)\log \frac{1}{r_1} < \omega(r) \log \frac{1}{r} < \delta + \omega(r)\log \frac{1}{r_1} < 2\delta, 
$$
for all $r\in(0,r_2)$, proving that $\lim\limits_{r\to 0} \omega(r)\log \frac{1}{r} = 0$.
\end{proof} 

The next lemma is used to prove that $\psi$ as in \eqref{eq:psi} is a Dini modulus of continuity. 

\begin{lemma}\label{lemma:dini-more}
Let $\omega$ be a Dini modulus of continuity. Then for any $\alpha > 1$, 
$$
\int_0^1 r^{\frac{\alpha}{2} - 1} \sqrt{ \int_r^1 \frac{\omega^{2\alpha}(\rho)}{\rho^{\alpha+1}}\,d\rho}\,dr < \infty. 
$$
\end{lemma}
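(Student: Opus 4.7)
The plan is to bound the inner integral by splitting the $\rho$-range at the geometric scale $\sqrt{r}$, exploiting the monotonicity of $\omega$ on the near piece $[r,\sqrt{r}]$ and the Dini assumption (together with the boundedness of $\omega$) on the far piece $[\sqrt{r},1]$. The point of this split is that neither a uniform crude bound on $\omega$ nor a split at a fixed fraction of $1$ allows one to offset the singular weight $\rho^{-\alpha-1}$ against the decay of $\omega$ at the origin, whereas at the scale $\sqrt{r}$ these two effects balance out.

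First I would write $\int_r^1 \frac{\omega^{2\alpha}(\rho)}{\rho^{\alpha+1}}\,d\rho = I_1(r) + I_2(r)$, with $I_1 = \int_r^{\sqrt{r}}$ and $I_2 = \int_{\sqrt{r}}^1$. For $I_1$, the monotonicity of $\omega$ gives $\omega^{2\alpha}(\rho)\leq \omega^{2\alpha}(\sqrt{r})$ on $[r,\sqrt{r}]$, so
$$I_1(r) \leq \omega^{2\alpha}(\sqrt{r})\int_r^{\sqrt{r}}\rho^{-\alpha-1}\,d\rho \leq \frac{\omega^{2\alpha}(\sqrt{r})}{\alpha}\,r^{-\alpha}.$$
For $I_2$, I would peel one factor of $\omega$ off and use $\omega^{2\alpha-1}(\rho)\leq \omega(1)^{2\alpha-1}$ (valid since $2\alpha-1>0$ and $\omega$ is bounded by monotonicity) together with $\rho^{-\alpha}\leq r^{-\alpha/2}$ on $[\sqrt{r},1]$, which gives
$$I_2(r)\leq r^{-\alpha/2}\,\omega(1)^{2\alpha-1}\int_{\sqrt{r}}^1\frac{\omega(\rho)}{\rho}\,d\rho \leq C\,r^{-\alpha/2},$$
where $C$ depends on $\omega$ only through its Dini integral and $\omega(1)$.

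Combining these via $\sqrt{a+b}\leq \sqrt{a}+\sqrt{b}$ and multiplying by $r^{\alpha/2-1}$, the outer integrand is dominated by a constant multiple of $\omega^{\alpha}(\sqrt{r})/r + r^{\alpha/4-1}$. The $r^{\alpha/4-1}$ piece is integrable on $(0,1)$ since $\alpha/4>0$. For the other, the substitution $u=\sqrt{r}$ reduces its integral to $2\int_0^1 \omega^\alpha(u)/u\,du$; since $\alpha\geq 1$ and $\omega$ is bounded, $\omega^\alpha(u)\leq \omega(1)^{\alpha-1}\omega(u)$, and integrability now follows from the Dini assumption. I expect the main subtlety to be the choice of the splitting scale: too close to $r$ and one loses the $\omega$-decay near the origin, while too close to $1$ the $\rho^{-\alpha-1}$ singularity propagates to an outer weight that is not integrable.
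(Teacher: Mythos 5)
Your proof is correct, and it takes a genuinely different and more elementary route than the paper's. The paper applies the Cauchy--Schwarz inequality to $\int \theta(r)/r\,dr$ against a logarithmic weight $|\log(r/2)|^{-\alpha/2}$, then unpacks the squared term with Fubini's theorem and controls the resulting integral using the fact that $\omega(\rho)|\log(\rho/2)|$ is small near the origin (Lemma~\ref{lemma:dini}). By contrast, you bound the inner integral directly by splitting at the geometric scale $\sqrt{r}$: on $[r,\sqrt{r}]$ monotonicity of $\omega$ gives $I_1(r)\leq \alpha^{-1}\omega^{2\alpha}(\sqrt{r})\,r^{-\alpha}$, and on $[\sqrt{r},1]$ the singular factor $\rho^{-\alpha}$ is absorbed into $r^{-\alpha/2}$ so that the Dini integral controls $I_2(r)\leq C\,r^{-\alpha/2}$. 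After $\sqrt{a+b}\leq\sqrt{a}+\sqrt{b}$ and the substitution $u=\sqrt{r}$, the first piece becomes $\int_0^1 \omega^\alpha(u)/u\,du$, which is finite by the Dini assumption since $\omega^\alpha\leq\omega(1)^{\alpha-1}\omega$; the second piece is integrable outright. What your approach buys is the avoidance of both Fubini and the auxiliary Lemma~\ref{lemma:dini}, and in fact your argument works for any $\alpha\geq 1$ (it only needs $\alpha\geq 1$ in the final step and $\alpha>1/2$ for $I_2$), a slightly wider range than the paper's $\alpha>1$. The paper's Cauchy--Schwarz route is a standard template for such Dini estimates and may be more robust to variations in the exponent structure of $\psi$, but for the specific integrand here your split at $\sqrt{r}$ is cleaner.
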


\begin{proof}
According to Lemma \ref{lemma:dini}, there is some $r_0\in(0,1]$ such that $\omega(r) \log \frac{2}{r} \leq 1$ for all $r\in(0,r_0)$. To simplify the notation, we shall assume, without loss of any generality, that $r_0 = 1$. 

Write $\theta(r) = \sqrt{ r^\alpha \int_r^1 \frac{\omega(\rho)^{2\alpha}}{\rho^{\alpha+1}}\,d\rho}$, and let $\e \in (0,\frac{1}{2})$ be arbitrary. By H\"older inequality, 
$$
\left(\int_\e^1 \frac{\theta(r)}{r}\,dr\right)^2 \leq \int_\e^1 \frac{1}{r|\log\frac{r}{2}|^\alpha}\,dr \int_\e^1 \frac{\theta^2(r)|\log\frac{r}{2}|^\alpha}{r}\,dr.
$$
The first integral on the right hand side is bounded uniformly for $\e\in(0,1)$, since by assumption $\alpha>1$. Hence, it suffices to prove the boundedness of the second integral. 

By the Fubini theorem, and integration by parts
$$
\begin{aligned}
\int_\e^1 \frac{\theta^2(r)|\log\frac{r}{2}|^\alpha}{r}\,dr &= \int_\e^1  \frac{\omega^{2\alpha}(\rho)}{\rho^{\alpha+1}} \int_\e^\rho r^{\alpha-1} |\log(r/2)|^\alpha \,dr \,d\rho \\
& \leq C \int_\e^1 \frac{\omega^{2\alpha}(\rho) |\log\frac{\rho}{2}|^\alpha }{\rho}\,d\rho \\
&\leq C \int_\e^1 \frac{d\rho}{\rho |\log \frac{\rho}{2}|^{\alpha}},
\end{aligned}
$$
where $C>1$ is a constant depending only on $\alpha$. Since $\alpha>1$, the integral on the rightmost side is bounded uniformly for all $\e\in(0,\frac{1}{2})$, from which the assertion of the lemma follows immediately. 
\end{proof} 

In what follows, we shall present an interior $BMO$-type estimate and $C^1$-estimate, for the spatial gradients for weak solutions to linear parabolic systems with Dini coefficients. Although these estimates are well understood by experts, we present the proofs for the sake of  convenience for non-expert  readers.

Let us begin with an interior log-Lipschitz estimate.  

\begin{lemma}\label{lemma:bmo-para-gen}
Let $A \in L^\infty((-1,0);L^2(B_1;\R^{n^2m^2}))$ satisfy \eqref{eq:a-ellip}, \eqref{eq:a-bd}, and \eqref{eq:a-dini}, with a modulus of continuity $\omega$ verifying the Dini condition, and let  $F\in L^2((-1,0);L^2(B_1;\R^m))$ be given. Suppose that $u$ is a weak solution of $\partial_tu-\ddiv (A \nabla u) = \ddiv F$ in $Q_1$. Then there exists a constant $C>1$, depending only on $n$, $m$, $\lambda$, $\Lambda$, and $\omega$, such that the following holds: if $\int_{Q_r} |F|^2 \,dX\leq r^{n+2}$ for all $r\in(0,1)$, and $\int_{B_1} |u(x,t)|^2\,dx\leq 1$ for a.e.\ $t\in(-1,0)$, then for each $r\in(0,\frac{3}{4})$, there exist a vectorial time-independent linear function $\ell_r$ and a vector $a\in\R^m$, independent of $r$, such that $|a|\leq C$, $|\nabla \ell_r|\leq C|\log r|$, and 
$$
\esssup_{t\in(-r^2,0)} \int_{B_r} |u(x,t) - a - \ell_r(x)|^2 \,dx  \leq Cr^{n+2}.
$$
\end{lemma}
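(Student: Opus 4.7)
The plan is a parabolic Campanato-type iteration on dyadic scales $r_k := 2^{-k}$, at each of which I would construct a space-affine approximation $a_k + B_k x$ to $u$ on $Q_{r_k}$. Concretely, I would establish by induction on $k\ge 0$ the inductive bound
\[
\esssup_{t \in (-r_k^2, 0)} \int_{B_{r_k}} |u(x, t) - a_k - B_k x|^2 \,dx \leq M r_k^{n+2} \qquad (P_k),
\]
for a constant $M$ depending only on $n$, $m$, $\lambda$, $\Lambda$, $\omega$. The base case $k = 0$ with $a_0 = B_0 = 0$ follows from the hypothesis $\int_{B_1} |u(\cdot, t)|^2\,dx \leq 1$.

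For the inductive step, I would work with $\tilde u := u - a_k - B_k x$, which solves $\partial_t \tilde u - \ddiv(A \nabla \tilde u) = \ddiv(F + A B_k)$, and compare it with the solution $v$ of the frozen-coefficient homogeneous system $\partial_t v - \ddiv(A(0,0) \nabla v) = 0$ on $Q_{r_k/2}$ with $v = \tilde u$ on $\partial_p Q_{r_k/2}$. The error $w := \tilde u - v$ then satisfies $\partial_t w - \ddiv(A(0,0) \nabla w) = \ddiv((A - A(0,0)) \nabla u + F)$ with zero parabolic boundary data. Caccioppoli's inequality for $\tilde u$, combined with $(P_k)$ and the hypothesis on $F$, gives $\int_{Q_{r_k/2}} |\nabla u|^2 \leq C r_k^{n+2}(M + |B_k|^2 + 1)$; plugged into the standard $L^2$-energy identity for $w$, this yields
\[
\esssup_t \int_{B_{r_k/2}} |w|^2 \leq C r_k^{n+2}\bigl[\omega(r_k)^2(M + |B_k|^2 + 1) + 1\bigr].
\]
Interior $C^2$-regularity for the constant-coefficient solution $v$ (via rescaling to $Q_1$) produces the parabolic Taylor-type estimate
\[
\esssup_t \int_{B_{\theta r_k}} |v - v(0,0) - \nabla v(0,0) \cdot x|^2 \,dx \leq C \theta^{n+4} r_k^{n+2}\bigl(M + \omega(r_k)^2 |B_k|^2 + 1\bigr)
\]
for $\theta \in (0, 1/4)$. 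Setting $a_{k+1} := a_k + v(0,0)$, $B_{k+1} := B_k + \nabla v(0,0)$, and $r_{k+1} := \theta r_k$, a triangle inequality then delivers the decay
\[
r_{k+1}^{-(n+2)}\esssup_t \int_{B_{r_{k+1}}} |u - a_{k+1} - B_{k+1} x|^2\, dx \leq C\theta^2(M + 1) + C\theta^{-(n+2)}\bigl[\omega(r_k)^2(M + |B_k|^2) + 1\bigr].
\]

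The principal obstacle is the term $\omega(r_k)^2|B_k|^2$, since $|B_k|$ is not bounded a priori --- the lemma only permits logarithmic growth of $|\nabla \ell_r|$. An elementary Poincar\'e-type comparison of $(P_k)$ and $(P_{k+1})$ for affine functions on $B_{r_{k+1}}$ yields $|B_{k+1} - B_k| \leq C\sqrt{M}$, hence $|B_k| \leq C\sqrt{M}\, k$. The decisive input from Lemma \ref{lemma:dini} is $\omega(r_k)\, k \to 0$ as $k \to \infty$, which forces $\omega(r_k)^2|B_k|^2$ to be uniformly bounded and, in fact, to vanish for large $k$. Choosing $\theta$ so small that $C\theta^2 \leq 1/4$, and then restricting to $k$ large enough so that both $C\theta^{-(n+2)}\omega(r_k)^2 \leq 1/4$ and $\omega(r_k)^2|B_k|^2$ is small, the decay collapses to the bound $M/2 + \text{const}$, closing the induction for $M$ chosen appropriately. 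The finitely many small $k$ are absorbed into $M$ via the crude form of the same inequality.

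Finally, $|a_{k+1} - a_k| \leq C\sqrt{M}\, r_k$ is geometric, so $a := \lim_{k \to \infty} a_k$ exists with $|a| \leq C$, and $|B_k| \leq C\sqrt{M}\, k \leq C|\log r_k|$. For arbitrary $r \in (0, 3/4)$, picking $k$ with $r \in [r_{k+1}, r_k)$ and setting $\ell_r(x) := (a_k - a) + B_k x$ yields a time-independent linear-in-$x$ function with $|\nabla \ell_r| \leq C|\log r|$ and $\esssup_t \int_{B_r} |u - a - \ell_r|^2\,dx \leq C r^{n+2}$, completing the proof.
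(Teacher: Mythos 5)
Your proposal is essentially correct and follows the same approach as the paper: a Campanato-type iteration on geometric scales in which one compares $u$ against the frozen-coefficient caloric replacement, invokes interior $C^2_x$ estimates to improve the affine approximation by one order, and uses Lemma \ref{lemma:dini} (i.e., $\omega(r)\log\frac{1}{r}\to 0$) to show that the accumulated slope $|B_k|\lesssim \sqrt{M}\,k$, although unbounded, is tamed after multiplication by $\omega(r_k)$. The paper organizes the iteration slightly differently: it first rescales so that $\omega(r)|\log r|\leq\eta$ for all $r$ and works at unit scale at every step, and it leaves the variable coefficient $A_k$ on the left-hand side of the error equation, so that the source is $(A_k(0,0)-A_k)\nabla v_k$ rather than your $(A-A(0,0))\nabla u$; this removes the need for a large-$k$ threshold $K_0$ and makes the choice of $\mu$ and $\eta$ explicit. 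Your variant, with the initial finitely many steps handled by the trivial bound $\esssup_t\int_{B_{r_k}}|u|^2\leq 1\leq M r_k^{n+2}$, is a valid alternative, since the threshold $K_0$ determined by requiring $C\theta^{-(n+2)}\omega(r_k)^2|B_k|^2\leq M/4$ reduces to a bound on $(\omega(r_k)k)^2$ that is independent of $M$, so no circularity arises. Two minor technical points worth recording: the energy identity for $w$ should be carried out by testing with a time-mollified $w$ (as the paper does with $\beta_\epsilon\ast(v_k-u_k)$), since $w$ itself is not admissible as a test function for the time derivative; and the lemma asks for a \emph{linear} $\ell_r$ while your $\ell_r(x)=(a_k-a)+B_kx$ is affine, but since $|a_k-a|\leq C\sqrt M\,r_k$ the constant piece can simply be dropped at cost $Cr^{n+2}$, so the statement as stated is recovered.
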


\begin{proof} 
The proof involves standard approximation techniques. 

According to Lemma \ref{lemma:dini}, $\omega$ satisfies $\lim\limits_{r\to 0} \omega(r)\log \frac{1}{r} = 0$. For this reason, after suitable scaling argument, it suffices to prove the following claim: there are some small positive constants $\mu$ and $\eta$, depending only on $n$, $m$, $\lambda$, $\Lambda$, and $\omega$, such that if, in addition to the assumptions in the statement, for all $r\in(-1,0)$, 
\begin{equation}\label{eq:bmo-para-asmp}
\omega(r)|\log r| \leq\eta\quad \text{and}\quad \int_{Q_r} |F|^2\,dX \leq \eta^2 r^{n+2},
\end{equation} 
then for each $k=1,2,\cdots$, there exists a time-independent vectorial affine function $\ell_k$ such that 
\begin{equation}\label{eq:bmo-para-re1}
|\ell_k (0) - \ell_{k-1}(0)| \leq c\mu^k, \quad |\nabla \ell_k| \leq ck
\end{equation} 
and 
\begin{equation}\label{eq:bmo-para-re}
\esssup_{t\in(-\mu^{2k},0)} \int_{B_{\mu^k}} |u(x,t) - \ell_k(x)|^2\,dx \leq \mu^{k(n+2)},
\end{equation}
where $c>1$ depends only on $n$, $m$ and $\lambda$.

In what follows, $c$ will be a constant depending only on $n$, $m$, and $\lambda$, and $C$ will be a constant depending further on $\Lambda$ and $\omega$. These constants may differ at each appearance.  

Let $\mu\in(0,\frac{3}{4})$ be a sufficiently small number, to be determined, and suppose that we have found, for some integer $k\geq 0$, a time-independent vectorial affine function $\ell_k$, for which \eqref{eq:bmo-para-re} holds; note that for $k = 0$, we can simply choose $\ell_0 = 0$ so the initial case is satisfied. 

Define 
$$
u_k (x,t) = \frac{u(\mu^k x,\mu^{2k} t) - \ell_k(\mu^k x)}{\mu^k}.
$$
Then  $u_k$ is a weak solution of 
\begin{equation}\label{eq:uk-sys}
\partial_t u_k - \ddiv (A_k \nabla u_k) = \ddiv F_k \quad\text{in }Q_1,
\end{equation} 
where 
$$
\begin{aligned}
A_k(x,t) &= A(\mu^k x, \mu^{2k} t),\\
F_k(x,t) &= F(\mu x,\mu^{2k} t) + (A(\mu^k x, \mu^{2k}t)- A(0,0))\nabla \ell_k. 
\end{aligned}
$$
Also, 
by \eqref{eq:bmo-para-re} and the Caccioppoli inequality, $u_k$ satisfies 
\begin{equation}\label{eq:uk-L2-Linf}
\esssup_{t\in(-1,0)}\int_{B_1} |u_k(x,t)|^2\,dx + \int_{Q_{3/4}} |\nabla u_k|^2\,dX \leq c.
\end{equation} 
Clearly, $A_k$ satisfies the same structure conditions \eqref{eq:a-ellip}, \eqref{eq:a-bd}, and \eqref{eq:a-dini}. On the other hand, by \eqref{eq:bmo-para-asmp} and \eqref{eq:bmo-para-re1}, we can deduce that 
\begin{equation}\label{eq:Fk-Linf-L2}
\begin{split}
\int_{-1}^0\int_{B_1} |F_k|^2\,dx\,dt &\leq 2\eta^2 + (n^2m^2\Lambda k\omega(\mu^k))^2 \leq 4\eta^2, 
\end{split}
\end{equation}
provided that we choose $\mu$ so as to satisfy 
\begin{equation}\label{eq:mu}
|\log \mu| \geq n^2m^2\Lambda.
\end{equation} 
Consider the weak solution
 $v_k$  to
\begin{equation}\label{eq:vk-sys}
\begin{cases}
\partial_t v_k = \ddiv (A_k(0,0)\nabla v_k)  & \text{in }Q_{3/4},\\
v_k = u_k & \text{on }\partial_p Q_{3/4}.
\end{cases}
\end{equation}
Let $\beta_\e\in C_0^\infty(\R)$ be a mollifier. Then we can use $\beta_\e \ast (v_k - u_k)$ as the test function to \eqref{eq:vk-sys}. Due to \eqref{eq:uk-L2-Linf}, we obtain 
$$
\esssup_{t\in(-\frac{9}{16}+\e,-\e)} \int_{B_{3/4}} |\beta_\e\ast v_k|^2 \,dx + \int_{-\frac{9}{16}+\e}^{-\e} \int_{B_{3/4}} |\nabla (\beta_\e\ast v_k)|^2\,dx\,dt \leq c. 
$$ 
Hence, letting $\e\to 0$, we arrive at 
\begin{equation}\label{eq:vk-W12}
\esssup_{t\in(-\frac{9}{16},0)} \int_{B_{3/4}} |v_k(x,t)|^2 \,dx + \int_{Q_{3/4}} |\nabla v_k|^2\,dX \leq c. 
\end{equation} 
Thus, by interior $C^{2}_x$ estimates for constant linear parabolic systems, we can find some time-independent vectorial affine function $\hat\ell_k$ such that 
\begin{equation}\label{eq:vk-C1a-1}
|\ell_k(0)| + |\nabla \ell_k|\leq c,
\end{equation} 
and
\begin{equation}\label{eq:vk-C1a}
\sup_{t\in(-r^2,0)} \int_{B_r} |v_k (x,t) - \hat\ell_k(x)|^2\,dx \leq c r^{n+4}.
\end{equation} 
On the other hand, subtracting \eqref{eq:uk-sys} from \eqref{eq:vk-sys}, and then using $\beta_\e \ast (v_k - u_k)$ as the test function to the resulting system (with $\beta_\e$ being the mollifier as above), thanks to \eqref{eq:a-dini}, \eqref{eq:bmo-para-asmp}, \eqref{eq:uk-L2-Linf}, and \eqref{eq:vk-W12}, we deduce that
\begin{multline}
 \esssup_{t\in(-\frac{9}{16}+\e,-\e)} \int_{B_{3/4}} |\beta_\e\ast (v_k  - u_k)|^2\,dx \\
\leq c \int_{Q_{3/4}} ( |F_k|^2 + \omega(\mu^k)  |\nabla v_k|^2)\,dX \leq c\eta^2. 
\end{multline} 
Letting $\e\to 0$, and combining the resulting expression with \eqref{eq:vk-C1a}, yields 
\begin{equation}\label{eq:vk-uk-L2-Linf}
\esssup_{t\in(-\mu^2,0)} \int_{B_\mu} |u_k (x,t) - \hat\ell_k(x) |^2\,dx \leq c(\mu^{n+4} + \eta^2) \leq \mu^{n+2}, 
\end{equation} 
provided that we first choose $\mu$ sufficiently small so that both \eqref{eq:mu} and $c\mu^{n+4} \leq \frac{1}{2} \mu^{n+2}$ hold, and then select $\eta$ accordingly so that $c\eta^2 \leq \frac{1}{2} \mu^{n+2}$. Clearly, $\mu$ and $\eta$ depend only  on $n$, $m$, $\lambda$, $\Lambda$, and $\omega$. 

To this end, we define 
$$
\ell_{k+1} (x) = \ell_k (x) + \mu^k \hat\ell_k \left(\frac{x}{\mu^k}\right),
$$
which is again a vectorial affine function. In view of \eqref{eq:bmo-para-asmp}, \eqref{eq:vk-C1a-1}, \eqref{eq:vk-C1a}, and \eqref{eq:vk-uk-L2-Linf}, this proves the induction hypotheses \eqref{eq:bmo-para-re1} and \eqref{eq:bmo-para-re} with $k$ replaced by $k+1$. The proof is now finished by the induction principle. 
\end{proof} 

Next, we establish  an interior $C^1$ estimate. 

\begin{lemma}\label{lemma:C1-para}
Under the same setting as in Lemma \ref{lemma:bmo-para}, there exists a constant $C>1$, depending only on $n$, $m$, $\lambda$, $\Lambda$, and $\omega$, such that the following holds: if $\int_{Q_r} |F|^2\,dx \leq \omega(r) r^{n+2}$ for all $r\in(0,1)$, and $\int_{B_r} |u(x,t)|^2\,dx \leq 1$ for a.e.\ $t\in(-1,0)$, then there exists a time-independent vectorial affine function $\ell$ such that $|\ell(0)| + |\nabla \ell| \leq C$, and 
\begin{equation*}
\esssup_{t\in(-r^2,0)} \int_{B_r} |u(x,t) - \ell|^2\,dx \leq Cr^{n+2}\omega_1(r), 
\end{equation*} 
where $\omega_1$ is a modulus of continuity depending only on $\omega$. 
\end{lemma}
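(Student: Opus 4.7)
The lemma is the Dini $C^1$ companion of Lemma \ref{lemma:bmo-para-gen}: the improved hypothesis $\int_{Q_r}|F|^2 \le \omega(r) r^{n+2}$ upgrades the log-Lipschitz conclusion to a genuine $C^1$-type Campanato estimate. The plan is to re-run the dyadic freezing-coefficients iteration from Lemma \ref{lemma:bmo-para-gen}, now exploiting the improved smallness of the source at each scale to keep $|\nabla \ell_k|$ \emph{uniformly bounded} (rather than logarithmically growing) and to obtain a quantitative rate of decay, which in turn defines the output modulus $\omega_1$. This is the parabolic analog of the argument in \cite{Li17}.

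\textbf{Setup.} Fix $\mu \in (0, 3/4)$ small to be chosen. I construct by induction on $k \ge 0$ a time-independent vectorial affine function $\ell_k$ and a number $\sigma_k \ge 0$ satisfying
$$\esssup_{t \in (-\mu^{2k}, 0)} \int_{B_{\mu^k}} |u(x,t) - \ell_k(x)|^2\,dx \le \mu^{k(n+2)} \sigma_k^2,$$
together with the increment bounds $|\ell_{k+1}(0) - \ell_k(0)| \le c\mu^k \sigma_k$, $|\nabla \ell_{k+1} - \nabla \ell_k| \le c\sigma_k$, and the base case $|\ell_0(0)| + |\nabla \ell_0| \le c$. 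The initial case is handled by $\ell_0 \equiv 0$ and $\sigma_0 = 1$.

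\textbf{Inductive step.} Given $\ell_k$ with $\sigma_k > 0$, rescale
$$u_k(x,t) := \frac{u(\mu^k x, \mu^{2k} t) - \ell_k(\mu^k x)}{\mu^k \sigma_k},$$
so that $\esssup_t \int_{B_1} |u_k|^2 \le 1$ and $u_k$ solves $\partial_t u_k - \ddiv(A_k \nabla u_k) = \ddiv H_k$ in $Q_1$, with $A_k(x,t) = A(\mu^k x, \mu^{2k} t)$ and $H_k = \sigma_k^{-1}\bigl[F(\mu^k \cdot, \mu^{2k} \cdot) + (A_k - A_k(0,0))\nabla \ell_k\bigr]$. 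The improved decay hypothesis on $F$, together with \eqref{eq:a-dini} and the inductive bound $|\nabla \ell_k| \le C$, gives $\|H_k\|_{L^2(Q_{3/4})}^2 \le c\,\omega(\mu^k)/\sigma_k^2$. Let $v_k$ solve the constant-coefficient system $\partial_t v_k - \ddiv(A_k(0,0)\nabla v_k) = 0$ in $Q_{3/4}$ with $v_k = u_k$ on $\partial_p Q_{3/4}$. Interior $C^2_x$ regularity for constant-coefficient parabolic systems produces a time-independent affine function $\hat\ell_k$ with $|\nabla \hat\ell_k| \le c$ and $\esssup_{t \in (-\mu^2, 0)} \int_{B_\mu} |v_k - \hat\ell_k|^2 \,dx \le c\mu^{n+4}$, while an energy estimate for $w_k = u_k - v_k$ (zero parabolic boundary data) gives $\esssup_t \int_{B_{3/4}} |w_k|^2 \le c\omega(\mu^k)/\sigma_k^2$. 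Setting $\ell_{k+1}(x) := \ell_k(x) + \sigma_k \mu^k \hat\ell_k(x/\mu^k)$ and undoing the rescaling yields the Campanato bound at level $k+1$ with the recursion
$$\sigma_{k+1}^2 \le c\mu^2\, \sigma_k^2 + c\mu^{-(n+2)}\, \omega(\mu^k),$$
and the claimed increments for $\ell_{k+1} - \ell_k$. Choosing $\mu$ so that $c\mu^2 \le 1/2$ closes the induction and maintains the uniform bound on $|\nabla \ell_k|$.

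\textbf{Closure and main obstacle.} Solving the linear recursion produces $\sigma_k^2 \le 2^{-k} + c\sum_{j=0}^{k-1} 2^{-(k-1-j)} \omega(\mu^j)$; splitting this convolution sum at $j = k/2$, together with monotonicity and the Dini property of $\omega$, yields a modulus $\omega_1$ depending only on $\omega$ with $\sigma_k^2 \le \omega_1(\mu^k)$. The affine function $\ell$ claimed by the lemma is then extracted as the limit of $\ell_k$, using the increment bounds to verify that $\ell_k(0)$ and $\nabla \ell_k$ are Cauchy; the uniform bound $|\ell(0)| + |\nabla \ell| \le C$ follows by letting $k \to \infty$. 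Interpolating trivially between consecutive dyadic scales converts the per-scale estimate into the stated bound for all $r \in (0, 1/4)$. The main technical point is precisely this last step: converting the convolution recursion for $\sigma_k^2$ into a legitimate modulus $\omega_1$ and verifying summability of the gradient increments $|\nabla \ell_{k+1} - \nabla \ell_k|$ so that the limit $\ell$ actually exists. This is where the Dini hypothesis on $\omega$ is genuinely invoked, following the elliptic analog in \cite{Li17}.
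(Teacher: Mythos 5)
Your overall strategy---rescaling, freezing the coefficient, iterating with a decaying weight $\sigma_k$, and extracting a limiting affine function $\ell$---is exactly the argument the paper has in mind (the paper simply omits it, saying it follows the same lines as Lemma~\ref{lemma:bmo-para-gen}). The setup, the rescaled equation for $u_k$, the decomposition $u_k = v_k + w_k$, the constant-coefficient $C^2_x$ estimate for $v_k$, the energy estimate for $w_k$, and the recursion for $\sigma_k$ are all correct in outline.

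However, there is a genuine gap in the closure step, and it is precisely the step you flagged as ``the main technical point'' but did not carry out. With the hypothesis taken literally as $\int_{Q_r}|F|^2\,dX \leq \omega(r)\,r^{n+2}$, the recursion you wrote, $\sigma_{k+1}^2 \leq c\mu^2\sigma_k^2 + c\mu^{-(n+2)}\omega(\mu^k)$, yields (after the geometric part dies) $\sigma_k \sim \sqrt{\omega(\mu^{k})}$, so the gradient increments satisfy $|\nabla\ell_{k+1}-\nabla\ell_k|\leq c\sigma_k \sim \sqrt{\omega(\mu^k)}$. For the limit $\ell$ to exist you need $\sum_k \sqrt{\omega(\mu^k)} < \infty$, which is \emph{strictly stronger} than the Dini condition $\sum_k\omega(\mu^k)<\infty$. (Take $\omega(r)=|\log r|^{-2}$: this is Dini, but $\sqrt{\omega(\mu^k)}\sim 1/k$ is not summable, so $\nabla\ell_k$ diverges logarithmically.) Thus, as written, your proof---and indeed the lemma as literally stated---does not close under the Dini hypothesis alone.

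The resolution is that the stated hypothesis should read $\int_{Q_r}|F|^2\,dX \leq \omega(r)^2\,r^{n+2}$ (a missing square), which is exactly what the lemma's sole application delivers: the parabolic analogue of \eqref{eq:F-L2} gives $\int_{Q_\rho}|F_{2^{-k_0}}+\hat\phi_{2^{-k_0}}|^2\,dX \leq CM^2\rho^{n+2}\psi(\rho)^2$ with $\psi$ a Dini modulus, matching the form required by \cite[Proposition~2.1]{Li17} in the elliptic case. With the squared hypothesis your recursion becomes $\sigma_{k+1}^2 \leq c\mu^2\sigma_k^2 + c\mu^{-(n+2)}\omega(\mu^k)^2$, giving $\sigma_k\lesssim 2^{-k/4}+\omega(\mu^{k/2})$, and then $\sum_k\sigma_k<\infty$ follows directly from the Dini condition. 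You should either correct the exponent in the hypothesis or explicitly verify $\sum_k\sqrt{\omega(\mu^k)}<\infty$ (which would require assuming $\sqrt\omega$ Dini, not just $\omega$ Dini); as written the appeal to ``the Dini hypothesis on $\omega$'' does not justify the convergence of $\nabla\ell_k$.

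One smaller point: you should also note that the perturbation argument requires $\|H_k\|_{L^2(Q_{3/4})}$ to be \emph{small}, not merely finite, so you need a lower bound $\sigma_k^2 \gtrsim \omega(\mu^{k})^2$ (or $\gtrsim\omega(\mu^k)$, depending on the exponent) which your recursion does provide since $\sigma_{k+1}^2\geq c\mu^{-(n+2)}\omega(\mu^k)^2$; it is worth stating this explicitly since otherwise the constant-coefficient approximation step is not legitimate.
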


\begin{proof}
The proof follows   essentially the same lines as that   of  Lemma \ref{lemma:bmo-para}, and it is  omitted. 
\end{proof}



\begin{thebibliography}{9999999}


\bibitem[AM13]{John-Hayk}
J. Andersson and H. Mikayelyan,
{\it The zero level set for a certain weak solution, with applications to the Bellman equations},
Trans. Amer. Math. Soc. {\bf 365} (2013), 2297--2316. 

\bibitem[Acq92]{Acq92}
P. Acquistapace,
{\it On $BMO$ regularity for linear elliptic systems},
Ann. Mat. Pura Appl. (4) {\bf 161} (1992), 231--269. 

\bibitem[ALS13]{ALS}
J. Andersson, E. Lindgren and H. Shahgholian,
{\it Optimal regularity for the no-sign obstacle problem},
Comm. Pure Appl. Math. {\bf 66} (2013), 245--262.

\bibitem[AI96]{AL-IS} 
G. Alessandrini and V. Isakov, 
{\it Analyticity and uniqueness for the inverse conductivity problem},
Rend. Instit. Mat. Univ. Trieste {\bf 28} (1996), 351--369.

\bibitem[AK07]{Am-Ka} 
H. Ammari and H. Kang, 
Polarization and moment tensors,
With applications to inverse problems and effective medium theory. Applied Mathematical Sciences, 162. Springer, New York, 2007. x+312 pp. 


\bibitem[ACS01]{ACS} 
I. Athanasopoulos, L. A. Caffarelli and S. Salsa, 
{\it The free boundary in an inverse conductivity problem},
J. Reine Angew. Math. {\bf 534} (2001), 1--31.

\bibitem[CJK02]{CJK} 
L. A. Caffarelli,  D. Jerison and C. E.  Kenig,
{\it Some new monotonicity theorems with applications to free boundary problems},
Ann. of Math. {\bf 155} (2002), 369--404. 

\bibitem[CDS18]{CDS18}
L. A. Caffarelli, D. De Silva and O. Savin,
{\it Two-phase anisotropic free boundary problems and applications to the Bellman equation in 2D},
Arch. Rational Mech. Anal. {\bf 228} (2018), 477--493.

\bibitem[Cam81]{Cam}
S. Campanato,
{\it $L^p$ Regularity and partial Hölder continuity for solutions of second order parabolic systems with strictly controlled growth},
Annali di Matematica Pura ed Applicata {\bf 128} (1981), 287--316.

\bibitem[EFV92]{EFV}
L. Escauriaza, E. B. Fabes and G. Verchota,
{\it On a regularity theorem for weak solutions to transmission problems with internal Lipschitz boundaries},
Proc. Amer. Math. Soc. {\bf 115} (1992), 1069–1076.


\bibitem[FS14]{FS14}
A. Figalli and H. Shahgholian,
{\it A general class of free boundary problems for fully nonlinear elliptic equations},
Arch. Rational Mech. Anal. {\bf 213} (2014), 269--286. 

\bibitem[FS15]{FS15}
A. Figalli and H. Shahgholian,
{\it A general class of free boundary problems for fully nonlinear parabolic equations},
Annali di Matematica {\bf 194} (2015), 1123--1134. 

\bibitem[Gia78]{Gia78}
M. Giaquinta,
{\it A counter-example to the boundary regularity of solutions to elliptic quasilinear systems},
Manuscripta Math. {\bf 24} (1978), 217--220. 

\bibitem[GM12]{GM12}
M. Giaquinta and L. Martinazzi,
{\it An Introduction to the Regularity Theory for Elliptic Systems, Harmonic Maps and Minimal Graphs},
Harmonic Maps and Minimal Graphs. Pisa, Edizioni Della Normale, 2012.
 
 \bibitem[Isa17]{Isak-2017}
V. Isakov,
Victor Inverse problems for partial differential equations. Third edition. Applied Mathematical Sciences, 127. Springer, Cham, 2017. xv+406 pp.

\bibitem[KLS1]{KLS1} 
S. Kim, K.-A. Lee and H. Shahgholian, 
{\it An elliptic free boundary arising from the jump of conductivity},
Nonlinear Anal. {\bf 161} (2017), 1--29.

\bibitem[KLS2]{KLS2} 
S. Kim, K.-A. Lee and H. Shahgholian, 
{\it Nodal sets for ``broken'' quasilinear PDEs},
Indiana Univ. Math. J. {\bf 68} (2019), 1113--1148.

\bibitem[Li17]{Li17}
Y. Li,
{\it On the $C^1$ regularity of solutions to divergence form elliptic systems with Dini-continuous coefficients},
Chin. Ann. Math. {\bf 38B} (2017), 489--496.

\bibitem[LN03]{LN}
Y. Li and L. Nirenberg,
{\it Estimates for elliptic systems from composite material},
Comm. Pure Appl. Math. {\bf 56} (2003), 892--925. 

\bibitem[SY02]{SY}
V. Sver\'ak and X. Yan,
{\it Non-Lipschitz minimisers of smooth uniformly convex variational integrals},
Proc. Natl. Acad. Sci. USA {\bf 99} (2002), 15269--15276. 



\end{thebibliography}
\end{document}